\newtheorem{theorem}{Theorem}[section]
\newtheorem{corollary}[theorem]{Corollary}
\newtheorem{proposition}[theorem]{Proposition}
\newtheorem{lemma}[theorem]{Lemma}
\theoremstyle{definition}
\newtheorem{example}[theorem]{Example}
\newtheorem{remark}[theorem]{Remark}
\theoremstyle{property}
\newcommand{\contraction}[5][1ex]{%
  \mathchoice
    {\contraction@\displaystyle{#2}{#3}{#4}{#5}{#1}}%
    {\contraction@\textstyle{#2}{#3}{#4}{#5}{#1}}%
    {\contraction@\scriptstyle{#2}{#3}{#4}{#5}{#1}}%
    {\contraction@\scriptscriptstyle{#2}{#3}{#4}{#5}{#1}}}%
\newcommand{\contraction@}[6]{%
  \setbox0=\hbox{$#1#2$}%
  \setbox2=\hbox{$#1#3$}%
  \setbox4=\hbox{$#1#4$}%
  \setbox6=\hbox{$#1#5$}%
  \dimen0=\wd2%
  \advance\dimen0 by \wd6%
  \divide\dimen0 by 2%
  \advance\dimen0 by \wd4%
  \vbox{%
    \hbox to 0pt{%
      \kern \wd0%
      \kern 0.5\wd2%
      \contraction@@{\dimen0}{#6}%
      \hss}%
    \vskip 0.5ex
    \vskip\ht2}}
\newcommand{\contraction@@}[3][0.05em]{%
  \hbox{%
    \vrule width #1 height 0pt depth #3%
    \vrule width #2 height 0pt depth #1%
    \vrule width #1 height 0pt depth #3%
    \relax}}
\DeclareFontFamily{OT1}{rsfs}{}
\DeclareFontShape{OT1}{rsfs}{n}{it}{<-> rsfs10}{}
\DeclareMathAlphabet{\curly}{OT1}{rsfs}{n}{it}
\newcommand\LL{\mathbb L}
\renewcommand\O{\mathcal O}
\newcommand\PP{\mathbb P}
\newcommand\cP{\curly P}
\newcommand\mdot{{\scriptscriptstyle\bullet}}
\newcommand\D{\mathcal D}
\newcommand\cA{\mathcal A}
\newcommand\E{\mathbb E}
\newcommand\C{\mathbb C}
\newcommand\cC{\mathcal C}
\newcommand\FF{\mathbb F}
\newcommand\Q{\mathbb Q}
\newcommand\cQ{\mathcal Q}
\newcommand\R{\mathbb R}
\newcommand\Z{\mathbb Z}
\newcommand\cZ{\mathcal Z}
\renewcommand\k{\mathsf k}
\newcommand\Into{\ar@{^(->}[r]<-.3ex>}
\newcommand\bull{{\scriptscriptstyle\bullet}}
\newcommand\rk{\operatorname{rk}}
\newcommand\Alb{\operatorname{Alb}}
\newcommand\coker{\operatorname{coker}}
\newcommand\im{\operatorname{im}}
\newcommand\ev{\operatorname{ev}}
\newcommand\sfZ{\mathsf{Z}}
\newcommand\vir{\operatorname{vir}}
\newcommand\red{\operatorname{red}}
\newcommand\pt{\operatorname{pt}}
\newcommand\ch{\operatorname{ch}}
\newcommand\ext{\curly Ext}
\newcommand\Pic{\operatorname{Pic}}
\newcommand\AJ{\operatorname{AJ}}
\newcommand\Hilb{\operatorname{Hilb}}
\newcommand\beq[1]{\begin{equation}\label{#1}}
\newcommand\eeq{\end{equation}}
\newcommand\beqa{\begin{eqnarray*}}
\newcommand\eeqa{\end{eqnarray*}}
\DeclareRobustCommand{\SkipTocEntry}[4]{}
\begin{document}
\title[Stable pair and Seiberg-Witten invariants]{Stable pair invariants of surfaces and Seiberg-Witten invariants}
\author[M. Kool]{Martijn Kool \vspace{-5mm}}
\maketitle

\begin{abstract}



The moduli space of stable pairs on a local surface $X=K_S$ is in general non-compact. The action of $\C^*$ on the fibres of $X$ induces an action on the moduli space and the stable pair invariants of $X$ are defined by the virtual localization formula. We study the contribution to these invariants of stable pairs (scheme theoretically) supported in the zero section $S \subset X$. Sometimes there are no other contributions, e.g.~when the curve class $\beta$ is irreducible.

We relate these surface stable pair invariants to the Poincar\'e invariants of D\"urr-Kabanov-Okonek. The latter are equal to the Seiberg-Witten invariants of $S$ by work of D\"urr-Kabanov-Okonek and Chang-Kiem. We give two applications of our result. (1) For irreducible curve classes the GW/PT correspondence for $X = K_S$ implies Taubes' GW/SW correspondence for $S$. (2) When $p_g(S) = 0$, the difference of surface stable pair invariants in class $\beta$ and $K_S - \beta$ is a universal topological expression. 

\end{abstract}
\thispagestyle{empty}
\renewcommand\contentsname{\vspace{-8mm}}

\section{Introduction} \label{intro}

In \cite{PT1}, R.~Pandharipande and R.~P.~Thomas introduce stable pairs on projective 3-folds $X$ and show their moduli space is a component of the moduli space of all complexes in the bounded derived category $D^b(X)$. Formally, a stable pair $(F,s)$ on $X$ consists of a pure dimension 1 sheaf $F$ on $X$ and a section $s \in H^0(F)$ with $0$-dimensional cokernel. The moduli space of stable pairs has a perfect obstruction theory, which is symmetric in the case $X$ is Calabi-Yau. The associated invariants are known as stable pair invariants and are closely related to the Donaldson-Thomas and Gromov-Witten invariants of $X$ \cite{Bri,MNOP1, MNOP2, MOOP, PP1, PP2, PT1, PT2, Tod}. 

We consider the case where $X = K_S$ is the total space of the canonical bundle over a smooth projective surface $S$. Let $P_\chi(X,\beta)$ denote the moduli space of stable pairs $(F,s)$ on $X$ with class $\beta \in H_2(S)$ and $\chi(F)=\chi$. The space $P_\chi(X,\beta)$ carries a perfect obstruction theory but can be non-compact. Using the $\C^*$-action on the fibres of $X$ gives an induced obstruction theory on $P_\chi(X,\beta)^{\C^*}$. The components of this fixed locus are compact. For any\footnote{We denote the $\C^*$-equivariant cohomology of $X$ by $H^{*}_{\C^*}(X,\Q)$. Endowing $S$ with trivial $\C^*$-action, we then have $H^{*}_{\C^*}(X,\Q) \cong H^{*}_{\C^*}(S,\Q)$.} $\sigma_1, \ldots, \sigma_m \in H^{*}_{\C^*}(S,\Q)$ the stable pair invariants of $X$ are defined by the virtual localization formula of T.~Graber and R.~Pandharipande \cite{GP}:
\begin{equation} \label{invgeneral}
P_{\chi,\beta}(X, \tau_{\alpha_1}(\sigma_1) \cdots \tau_{\alpha_m}(\sigma_m)) := \int_{[P_\chi(X,\beta)^{\C^*}]^{\vir}} \frac{1}{e(N^{\vir})} \prod_{i=1}^{m} \tau_{\alpha_i}(\sigma_i).
\end{equation}
Here $e(N^{\vir})$ is the equivariant Euler class of the virtual normal bundle and $\tau_\alpha(\sigma)$ is the descendent insertion
\begin{equation} \label{deftau}
\tau_\alpha(\sigma) := \pi_{P*} \big( \pi_{X}^{*}(\sigma) \cap \ch_{\alpha+2}^{\C^*}(\FF) \big),
\end{equation}
where $\alpha_i \geq 0$, $\FF$ is the universal sheaf on $P_\chi(X,\beta) \times X$, and $\ch^{\C^*}$ denotes $\C^*$-equivariant Chern character. Note that these invariants are elements of $\Q[t,t^{-1}]$, where $t$ is the equivariant parameter. In this paper we will only be concerned with \emph{primary point insertions}
$$
\tau_0(\pt) := \pi_{P*} \big( \pi_{X}^{*}(\pt) \cap \ch_{2}^{\C^*}(\FF) \big),
$$
where $\pt$ denotes the (Poincar\'e dual of) the point class in $H^4(S,\Z)$.

The easiest component of $P_\chi(X,\beta)^{\C^*}$ consists of stable pairs which are scheme theoretically supported on the zero section $S \subset X$, i.e.~$P_\chi(S,\beta)$. 
Denote the Hilbert scheme of effective divisors on $S$ with class $\beta$ by $H_\beta:=\Hilb_\beta(S)$ and the universal curve by $\cC \rightarrow H_\beta$. Let $n$ be determined by $\chi = 1-h+n$, where $h$ is the arithmetic genus of curves with class $\beta$
\[
2h-2 = \beta(\beta+\k), \ \k:=c_1(\O(K_S)) \in H^2(S,\Z).
\]
Given a stable pair $[s : \O_S \rightarrow F]$ on $S$, the scheme theoretic support of $F$ is a Gorenstein curve $C \subset S$. The cokernel $Q$ of $s$ gives rise to a $0$-dimensional closed subscheme $Z \subset C$ via the surjection $\O_C \twoheadrightarrow \ext^1(Q,\O_C)$ obtained by dualizing. This provides an isomorphism \cite{PT3}
\[
P_\chi(S,\beta) \cong \Hilb^n(\cC / H_\beta),
\]
where $\Hilb^n(\cC / H_\beta)$ is the relative Hilbert scheme of $n$ points on the fibres of $\cC \rightarrow H_\beta$. In this paper we only consider contributions to \eqref{invgeneral} of the ``surface component'' $P_\chi(S,\beta)$, i.e.~
\begin{equation} \label{invS}
P_{\chi,\beta}(S, \tau_{\alpha_1}(\sigma_1) \cdots \tau_{\alpha_m}(\sigma_m)) := \int_{[P_\chi(S,\beta)]^{\vir}} \frac{1}{e(N^{\vir})} \prod_{i=1}^{m} \tau_{\alpha_i}(\sigma_i).
\end{equation}
We group these invariants together into a generating function
\begin{equation*} 
\sfZ_{\beta}^{P}(S, \tau_{\alpha_1}(\sigma_1) \cdots \tau_{\alpha_m}(\sigma_m)) := \sum_{\chi \in \Z} P_{\chi,\beta}(S, \tau_{\alpha_1}(\sigma_1) \cdots \tau_{\alpha_m}(\sigma_m)) q^\chi.
\end{equation*}
The following is our main theorem:
\begin{theorem} \label{mainthm}
For any $S, \beta$, and $m := \frac{\beta(\beta-\k)}{2}$
\[
\sfZ_{\beta}^{P}(S,\tau_0(\pt)^m) = t^m \, P_{S}(\beta) \, (q^{\frac{1}{2}} + q^{-\frac{1}{2}})^{2h-2},
\] 
where $t$ is the equivariant parameter, $2h-2 = \beta(\beta+\k)$, and $P_{S}(\beta) \in \Z$ is the numerical part of the Poincar\'e invariant $P_{S}^{+}(\beta)$ of D\"urr-Kabanov-Okonek. 
\end{theorem}

In this theorem 
$$
P_{S}^{+}(\beta) \in \Lambda^* H^1(S,\Z)^*
$$ 
are the Poincar\'e invariants of $S,\beta$ defined by M.~D\"urr, A.~Kabanov, and Ch.~Okonek \cite{DKO}. These invariants are defined in terms of a natural virtual cycle on the Hilbert scheme of curves $H_\beta$. They define a corresponding invariant $P_{S}^{-}(\beta)$ in terms of a natural virtual cycle on $H_{\k - \beta}$. We are only concerned with the numerical part (degree $b_1(S)$ in cohomology), which we denote by $P_{S}(\beta)$. D\"urr-Kabanov-Okonek conjectured that $P^{\pm}_{S}(\beta)$ are equal to the \emph{Seiberg-Witten invariants} of $S ,\beta$. Up to a purely algebraic conjecture, they prove this using their wall-crossing and blow-up formula. This algebraic conjecture was subsequently proved by H.-l.~Chang and Y.-H.~Kiem via a beautiful application of cosection localization \cite{CK}. As a corollary of the ``Poincar\'e/PT correspondence'' of Theorem \ref{mainthm} and the (much deeper!) Poincar\'e/SW correspondence of \cite{CK, DKO} we obtain:
\begin{corollary} \label{maincor}
In the notation of Theorem \ref{mainthm}
\[
\sfZ_{\beta}^{P}(S,\tau_0(\pt)^m) = t^m \, SW(\beta) \, (q^{\frac{1}{2}} + q^{-\frac{1}{2}})^{2h-2},
\] 
where $SW(\beta) \in \Z$ is the Seiberg-Witten invariant of $S,\beta$. 
\end{corollary}

We have two applications of Theorem \ref{mainthm} (and its Corollary \ref{maincor}). The first is to Gromov-Witten theory. For any $g$, let $\overline{M}_{g,m}^{\prime}(X,\beta)$ be the moduli space of stable maps with possibly disconnected domain curve and no collapsed connected components. Its $\C^*$-fixed locus $\overline{M}_{g,m}^{\prime}(S,\beta)$  has an induced perfect obstruction theory, which is the usual Gromov-Witten theory of $S$. The Gromov-Witten invariants of $X$ are defined by virtual localization
\begin{align*}
&R_{g,\beta}(X,\tau_{\alpha_1}(\sigma_1) \cdots \tau_{\alpha_m}(\sigma_m)) := \int_{[\overline{M}_{g,m}^{\prime}(S,\beta)]^{\vir}} \frac{1}{e(N^{\vir})} \prod_{i=1}^m \tau_{\alpha_i}(\sigma_i), \\
&\tau_{\alpha_i}(\sigma_i) := \psi_{i}^{\alpha_i} \ev_{i}^{*} (\sigma_i), \\
&\sfZ_{\beta}^{GW}(X,\tau_{\alpha_1}(\sigma_1) \cdots \tau_{\alpha_m}(\sigma_m)) := \sum_{g} R_{g,\beta}(X,\tau_{\alpha_1}(\sigma_1) \cdots \tau_{\alpha_m}(\sigma_m)) u^{2g-2},
\end{align*}
where $\psi_i$ are the $\psi$-classes and $\ev_i$ the evaluation maps. From Theorem \ref{mainthm} (or rather Corollary \ref{maincor}) we will deduce:
\begin{theorem} \label{GW=SW}
Fix any $S, \beta$ with $\beta$ irreducible. Let $m:=\frac{\beta(\beta-\k)}{2}$ and $2h-2 = \beta(\beta+\k)$. The GW/PT correspondence\footnote{I.e.~\cite[Conj.~3.3]{PT1} but for $X$ a non-compact Calabi-Yau 3-fold. See also \cite[Sect.~1.4]{MPT}.} for $\sfZ_{\beta}^{GW}(X,\tau_0(\pt)^m)$ and $\sfZ_{\beta}^{P}(X,\tau_0(\pt)^m)$ is equivalent to the following equality
\[
\sfZ_{\beta}^{GW}(X,\tau_0(\pt)^m) = t^m \, SW(\beta) \, (2 \sin(u/2))^{2h-2}. 
\]
In particular, setting $-q=e^{iu}$, the 
lowest order terms of $\sfZ_{\beta}^{GW}(X,\tau_0(\pt)^m)$ and $\sfZ_{\beta}^{P}(X,\tau_0(\pt)^m)$ in $u$ coincide if and only if
\[
SW(\beta) = \int_{[\overline{M}_{h,m}^{\prime}(S,\beta)]^{\vir}} \prod_{i=1}^m \tau_0(\pt).
\]
\end{theorem}
We have a similar result for any $S, \beta$ with $-K_S$ nef and $\beta$ sufficiently ample (Remark \ref{veryample}). This shows that the GW/PT correspondence implies (a very special case of) Taubes' GW/SW correspondence \cite{Tau1, Tau2}. 

The second application of Theorem \ref{mainthm} is a universal formula for the difference of stable pair invariants in class $\beta$ and $\k - \beta$. Instead of the stable pair invariants \eqref{invS}, one can define \emph{reduced} stable pair invariants of $X$ in class $\beta$
$$
P_{\chi,\beta}^{\red}(X, \tau_{\alpha_1}(\sigma_1) \cdots \tau_{\alpha_m}(\sigma_m)).
$$ 
These originate from stable pair theory on $P_\chi(X,\beta)$ by removing a trivial part of rank $p_g(S) := h^{0,2}(S)$ from the obstruction bundle. The reduced invariants coincide with the usual invariants when $p_g(S) = 0$. Reduced stable pair invariants have been studied by many people: see \cite{KT1} and references therein. Consider the surface part of these invariants for any number of point insertions\footnote{So $m$ need not be $\frac{\beta(\beta-\k)}{2}$ as in Theorems \ref{mainthm} and \ref{GW=SW}.}
$$
P_{\chi,\beta}^{\red}(S, \tau_{0}(\pt)^m).
$$ 
We recall the definition in Appendix \ref{AppA}, where we give a formula for the reduced virtual cycle (Proposition \ref{redcycle}). This formula is not used in the main body of this text, but is of independent interest. It extends a formula from \cite[Appendix]{KT2}, which was derived under the following condition 
\begin{equation} \label{strong}
H^2(L) = 0 \ \mathrm{for \ all \ line \ bundles} \  L \ \mathrm{with} \ c_1(L) = \beta.
\end{equation}
When Condition \eqref{strong} is satisfied it is shown in \cite{KT2} that $P_{\chi,\beta}^{\red}(S, \tau_{0}(\pt)^m)$ is given by a universal\footnote{This universality result is used in the recent proof of the Katz-Klemm-Vafa conjecture for all curve classes by R.~Pandharipande and R.~P.~Thomas \cite{PT4}.} function in $\beta^2$, $\beta.c_1(S)$, $c_1(S)^2$, $c_2(S)$, and certain invariants of the ring structure of $H^*(S,\Z)$. The precise statement is recalled in Theorem \ref{old} of Appendix \ref{AppA}. It is natural to ask whether universality holds for \emph{all} invariants $P_{\chi,\beta}^{\red}(S, \tau_{0}(\pt)^m)$, $P_{\chi,\beta}(S, \tau_{0}(\pt)^m)$. We show that this is \emph{not} the case (Remark \ref{failure}). The reason is as follows.  
Theorem \ref{mainthm} relates $P_{\chi,\beta}(S,\tau_0(\pt)^m)$ to Poincar\'e invariants. Using examples of \cite{DKO} we observe that Poincar\'e invariants do not satisfy universality (Examples \ref{example1}, \ref{example2}, \ref{example3} of Appendix \ref{AppB}). 

Despite failure of universality there is an interesting ``duality'' for surfaces with $p_g(S) = 0$. If $\beta$ or $\k-\beta$ satisfies Condition (\ref{strong}), then one of 
$$
P_{\chi,\beta}(S,\tau_0(\pt)^m), \ P_{\chi,\k-\beta}(S,\tau_0(\pt)^m)
$$ 
is given by a universal expression and the other is zero. These cases are covered by \cite{KT2}. The new case is when neither $\beta$ nor $\k-\beta $ satisfies (\ref{strong}). Then universality can fail for the individual invariants $P_{\chi,\beta}(S,\tau_0(\pt)^m)$, $P_{\chi,\k-\beta}(S,\tau_0(\pt)^m)$ (Examples \ref{example1}, \ref{example2}, \ref{example3} of Appendix \ref{AppB}), but their \emph{difference} satisfies a nice duality formula. Combining Theorem \ref{mainthm} and the wall-crossing formula of D\"urr-Kabanov-Okonek will lead to the following theorem:
\begin{theorem} \label{main}
Fix $S,\beta$ such that $p_g(S)=0$ and neither $\beta$ nor $\k-\beta$ satisfies Condition (\ref{strong}). If $\beta(\beta-\k)<0$, then 
\[
P_{\chi,\beta}(S,\tau_0(\pt)^m) = P_{\chi,\k-\beta}(S,\tau_0(\pt)^m)=0.
\]
If $\beta(\beta-\k) \geq 0$, then $\beta(\beta-\k)=0$, $q(S):=h^{0,1}(S)=1$, and\footnote{The fact that $\beta(\beta-\k) \geq 0$ implies $\beta(\beta-\k)=0$ and $q(S)=1$ is a non-trivial result of D\"urr-Kabanov-Okonek \cite{DKO}. This fact and its proof are recalled in Section \ref{walldual} (Proposition \ref{chi=0}). The number $[\gamma] \in \Z$ for any $\gamma \in H^2(S,\Z)$ on a surface with $q(S)=1$ is defined as follows. The class $\gamma$ determines an element $\int_S \gamma \wedge \cdot \in \Lambda^2 H^1(S,\Z)^*$. Since $q(S)=1$ we have a canonical isomorphism $\Lambda^2 H^1(S,\Z)^* \cong \Z$ induced by choosing an integral basis of $H^1(S,\Z) \subset H^1(S,\R)$ compatible with the orientation coming from the complex structure. The integer obtained in this way is denoted by $[\gamma]$.}
\begin{align*}
&P_{\chi,\beta}(S,\tau_0(\pt)^m) = P_{\chi,\k-\beta}(S,\tau_0(\pt)^m)=0, \ \mathrm{for} \ m>0 \\
&\frac{\sfZ_{\beta}^{P}(S)}{(q^{\frac{1}{2}} + q^{-\frac{1}{2}})^{2\beta^2}} - \frac{\sfZ_{\k - \beta}^{P}(S)}{(q^{\frac{1}{2}} + q^{-\frac{1}{2}})^{2(\k - \beta)^2}} = \frac{1}{2}[\beta]- \frac{1}{2}[\k-\beta], \ \mathrm{for} \ m=0.
\end{align*}
\end{theorem}
Examples of $S,\beta$ with $p_g(S)=0$, $\beta(\beta-\k) \geq 0$, and neither $\beta$ nor $\k-\beta$ satisfying Condition (\ref{strong}) are given in Remark \ref{(iii)examples} of Appendix \ref{AppB}. Such surfaces are necessarily elliptic fibrations or blow-ups thereof. The results of this paper make heavy use of the work of D\"urr-Kabanov-Okonek \cite{DKO}. For the purposes of readability, we take the opportunity to survey part of their work along the way. \\


\noindent \textbf{Acknowledgements.} I would like to thank Pierrick Bousseau, Daniel Huybrechts, Andr\'as Juh\'asz, and Ralph Klaasse for helpful discussions. Special thanks go to Richard Thomas for countless valuable comments and the anonymous referee for many suggestions on improving the exposition. This work was supported by EPSRC grant EP/G06170X/1, ``Applied derived categories''.

\section{Poincar\'e/PT correspondence}

In this section we give a formula for the virtual cycle $[\Hilb^n(\cC/H_\beta)]^{\vir}$ (Proposition \ref{nonredcycle}). We then exploit the ``product structure'' of this formula to prove Main Theorem \ref{mainthm}, Corollary \ref{maincor}, and Theorem \ref{GW=SW}.

\subsection{Virtual cycle} \label{vircycle}

Let $\cC \subset H_\beta \times S \rightarrow H_\beta$ be the universal curve over the Hilbert scheme $H_\beta = \Hilb_\beta(S)$ of effective divisors in class $\beta$. Recall from the introduction that $\Hilb^n(\cC/H_\beta) \cong P_\chi(S,\beta)$ is a component of the $\C^*$-fixed locus of the full 3-fold stable pair space $P_\chi(X,\beta)$. Also recall that $\chi = 1-h+n$, where $h$ is the genus of curves in class $\beta$. We start with the natural embedding
\[
\iota : \Hilb^n(\cC / H_\beta) \hookrightarrow S^{[n]} \times H_\beta,
\]
where $S^{[n]}$ is the Hilbert scheme of $n$ points on $S$. A point $(Z,C)$ lies in $\Hilb^n(\cC / H_\beta)$ if and only if
\[
s_{C}|_Z = 0 \in H^0(\O_{Z}(C)),
\]
where $s_C$ is the section cutting out $C \subset S$. The family version of this goes as follows. Let $\cZ \subset S^{[n]} \times S$ be the universal subscheme and let
$$
\pi : S^{[n]} \times S \times H_\beta \rightarrow S^{[n]} \times H_\beta
$$ 
denote projection. Then 
\begin{equation} \label{familytaut}
\O(\cC)^{[n]} := \pi_* \big( \O(S^{[n]} \times \cC)|_{\cZ \times H_\beta} \big)
\end{equation}
is a rank $n$ vector bundle on $S^{[n]} \times H_\beta$. It has a tautological section $\sigma$ with zero locus $\Hilb^n(\cC / H_\beta)$. This provides $\Hilb^n(\cC / H_\beta)$ with a \emph{relative} perfect obstruction theory over $H_\beta$. This construction does not provide an absolute perfect obstruction theory because $H_\beta$ can be singular. The notation \eqref{familytaut} is chosen for the following reason. Consider projections
\begin{displaymath}
\xymatrix
{
& \cZ \ar_{p}[dl] \ar^{q}[dr] & \\
S & & S^{[n]}.
}
\end{displaymath}
Then for any line bundle $L$ on $S$
\begin{equation*} \label{tautdef}
L^{[n]} := q_* p^* L
\end{equation*}
is a rank $n$ vector bundle on $S^{[n]}$ known as a tautological bundle (e.g.~see \cite{EGL}). It is not hard to see from the definitions that for any point $p = [C] \in H_\beta$
\begin{equation} \label{tautp}
\O(\cC)^{[n]} \Big|_{S^{[n]} \times \{p\}} \cong \O(C)^{[n]}.
\end{equation}

D\"urr-Kabanov-Okonek \cite{DKO} constructed a natural perfect obstruction theory on $H_\beta$ of the form 
\[
(R \pi_* \O_\cC(\cC))^\vee \rightarrow \LL_{H_\beta}.
\]
In \cite[Appendix]{KT1} this perfect obstruction theory on $H_\beta$ and the relative perfect obstruction theory on $\Hilb^n(\cC/H_\beta)$ are combined to construct an \emph{absolute} perfect obstruction theory on $\Hilb^n(\cC/H_\beta)$. See diagram (89) of \cite[Appendix]{KT1} for details. We denote the corresponding virtual cycles on $H_\beta$ and $\Hilb^n(\cC/H_\beta)$ by $[H_\beta]^{\vir}$ and $[\Hilb^n(\cC/H_\beta)]^{\vir}$. 
It is shown in \cite[Appendix]{KT1}, that $[\Hilb^n(\cC/H_\beta)]^{\vir}$ coincides with the virtual cycle induced by $\C^*$-localization of stable pair theory on $X=K_S$ to the component $\Hilb^n(\cC/H_\beta)$ of the $\C^*$-fixed locus.
Although $H_\beta$ can be singular, we still have the following:
\begin{proposition} \label{nonredcycle}
For any $S,\beta$ 
\[
\iota_* [\Hilb^n(\cC / H_\beta)]^{\vir} = (S^{[n]} \times [H_\beta]^{\vir}) . c_n\big(\O(\cC)^{[n]}\big)
\] 
and its virtual dimension is $v = \frac{\beta(\beta-\k)}{2}+n$.
\end{proposition}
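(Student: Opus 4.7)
The plan is to exhibit $\Hilb^n(\cC/H_\beta)$ as the zero locus of a section of a vector bundle on the ambient space $Y := S^{[n]} \times H_\beta$, equip $Y$ with the natural product perfect obstruction theory, and apply the standard formula that expresses the virtual cycle of such a zero locus as the top Chern class of the bundle capped with $[Y]^{vir}$.

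First I would check that $E := \pi_*(\O(\cC)|_{\cZ \times H_\beta})$ is a vector bundle of rank $n$ on $Y$: its fibre over $(Z,C)$ is $H^0(Z,\O_Z(C))$, which has length $n$, while $H^1$ vanishes automatically because $Z$ is zero-dimensional, so cohomology and base change give local freeness. Restricting the universal section of $\O(\cC)$ to $\cZ \times H_\beta$ then produces a tautological section $s$ of $E$ whose scheme-theoretic zero locus is precisely $\iota : \Hilb^n(\cC/H_\beta) \hookrightarrow Y$.

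Next I would equip $Y$ with the product POT combining $\Omega_{S^{[n]}}$ (since $S^{[n]}$ is smooth) with the pullback of the non-reduced obstruction theory $(R\pi_*\O_\cC(\cC))^\vee \to \LL_{H_\beta}$ recalled in Section 2.2; its virtual cycle is $S^{[n]} \times [H_\beta]^{vir}$. By construction, the non-reduced POT on $\Hilb^n(\cC/H_\beta)$ assembled in diagram (84) of \cite[Appendix]{KT1} is precisely the one induced from this ambient POT on $Y$ by the section $s$ of $E$. Granted that identification, the standard compatibility of virtual cycles under a vector-bundle zero-locus construction (used throughout \cite{KT1, KT2}) yields
\[
\iota_*[\Hilb^n(\cC/H_\beta)]^{vir} = c_n(E) \cap (S^{[n]} \times [H_\beta]^{vir}),
\]
which is the stated equality. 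For the virtual dimension, Riemann-Roch on any curve $C$ of class $\beta$ gives $\chi(\O_C(C)) = \beta^2 - (g-1) = \tfrac{1}{2}\beta(\beta-k)$, so $H_\beta$ has virtual dimension $\tfrac{1}{2}\beta(\beta-k)$; combining with $\dim S^{[n]} = 2n$ and the codimension-$n$ drop from capping with $c_n$ yields $\tfrac{1}{2}\beta(\beta-k) + n$, as asserted.

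The step I expect to require the most care is the identification of the POT from diagram (84) of \cite{KT1} with the one induced by the product POT on $Y$ together with the section $s$ of $E$: this should amount to reading off the morphism of distinguished triangles already encoded in (84), but the quasi-isomorphisms and sign conventions must be checked carefully, and one should confirm that the compatibility goes through at points where $H_\beta$ itself is singular.
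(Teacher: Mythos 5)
Your overall strategy coincides with the paper's: realize $\Hilb^n(\cC/H_\beta)$ as the zero locus of the tautological section of the rank-$n$ bundle $\pi_*(\O(\cC)|_{\cZ\times H_\beta})$ on $S^{[n]}\times H_\beta$, match the obstruction theory of diagram (84) of \cite{KT1} with the one induced from the product obstruction theory on the ambient space, and cap with $c_n$. However, the step you describe as ``the standard compatibility of virtual cycles under a vector-bundle zero-locus construction'' is not an off-the-shelf result in the generality required here, and this is where the actual content of the proof lies. The ambient space $S^{[n]}\times H_\beta$ is in general singular (since $H_\beta$ is) and carries a non-trivial perfect obstruction theory of its own, so the usual statement $[M]^{vir}=e(E)\cap[Y]$ for a zero locus in a \emph{smooth} ambient space does not apply. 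The paper proves a dedicated lemma for exactly this situation, based on Siebert's formula $[M]^{vir}=\{s_\bull(F^{\bull\vee})\,c_F(M)\}_v$ \cite{Sie} together with Fulton's canonical class, and that lemma has two hypotheses your proposal never verifies: that $\Hilb^n(\cC/H_\beta)\to H_\beta$ is flat, and that the tautological section is \emph{regular} in the sense of \cite[B.3.4]{Ful}.

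The regularity is the one genuinely geometric input and is not automatic: it follows from the fact that for \emph{every} $C\in H_\beta$ --- including non-reduced and badly singular curves --- the fibre $C^{[n]}\subset S^{[n]}$ has codimension exactly $n$, which rests on the results of \cite{AIK, Iar} on punctual Hilbert schemes of Gorenstein curves. Without this, the zero locus could have excess components, one would not have $C_{M/Y}\cong E|_M$, and the identity $\iota_*[M]=c_n(E)$ used at the end of the argument would fail. You do flag the singularity of $H_\beta$ as something ``to confirm,'' but you locate the difficulty in the identification of obstruction theories (which is essentially bookkeeping with diagrams (84) and (86) of \cite{KT1}) rather than in the intersection-theoretic statement itself. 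To complete the proof you need to supply both the Siebert-type lemma (or an equivalent functoriality/virtual-pullback argument valid over a singular, obstructed base) and the codimension-$n$ fact guaranteeing regularity and flatness.
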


For the proof of this proposition, we need the following lemma.
\begin{lemma}
Let $\pi : M \rightarrow B$ be a flat morphism of $\C$-schemes of finite type with $B$ projective. Let $E^\bull \rightarrow \LL_B$, $F^\bull \rightarrow \LL_M$ be perfect obstruction theories. Suppose that there exists a smooth projective variety $A$ and a rank $r$ vector bundle $V$ on $A \times B$ with regular\footnote{As defined in \cite[B.3.4]{Ful}.} section $s$ such that $M = s^{-1}(0) \subset A \times B$ and $\pi : M \rightarrow B$ commutes with projection $\pi_B : A \times B \rightarrow B$.  This induces a canonical relative perfect obstruction theory $G^\bull \rightarrow \LL_{M/B}$ of the form $G^\bull = \{ V^*|_M \rightarrow \pi_{A}^{*}(\Omega_A)|_M\}$. Suppose there exists an exact triangle
\begin{equation} \label{EFG}
\pi^* E^\bull \longrightarrow F^\bull \longrightarrow G^\bull.
\end{equation}
Denote inclusion by $\iota : M \hookrightarrow A \times B$. Then
\begin{equation} \label{cycle}
\iota_* [M]^{\vir} = (A \times [B]^{\vir}) . c_r(V).
\end{equation}
\end{lemma}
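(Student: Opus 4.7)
The plan is to invoke the compatibility of virtual fundamental classes in an exact triangle of perfect obstruction theories, via Manolache's virtual pullback (equivalently, Behrend-Fantechi compatibility of intrinsic normal cones in an exact triangle). The hypothesized triangle $\pi^* E^\bull \to F^\bull \to G^\bull$ covering $\pi^* \LL_B \to \LL_M \to \LL_{M/B}$ precisely exhibits $F^\bull$ as compatible with $E^\bull$ and the relative theory $G^\bull$, and yields
\[
[M]^{vir} = \pi^{!}_{G^\bull} [B]^{vir},
\]
where $\pi^{!}_{G^\bull}$ denotes the virtual pullback associated to the relative perfect obstruction theory $G^\bull \to \LL_{M/B}$.

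Next, I would factor $\pi$ as $M \stackrel{\iota}{\hookrightarrow} A \times B \stackrel{\pi_B}{\longrightarrow} B$, where $\pi_B$ is smooth with relative cotangent $\pi_A^* \Omega_A$, and $\iota$ is a regular embedding of codimension $r$ with conormal sheaf $V^*|_M$ (so that $\LL_{M/(A \times B)} \simeq V^*|_M[1]$). Splicing the two relative cotangent triangles identifies the given two-term complex $G^\bull = [V^*|_M \to \pi_A^* \Omega_A|_M]$ with $\LL_{M/B}$, and the virtual pullback accordingly factors as $\pi^{!}_{G^\bull} = \iota^{!} \circ \pi_B^{*}$, where $\pi_B^{*}$ is flat pullback and $\iota^{!}$ is the refined Gysin map of the regular embedding cut out by $s$. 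Since $\pi_B^{*}[B]^{vir} = A \times [B]^{vir}$, we obtain
\[
[M]^{vir} = \iota^{!}(A \times [B]^{vir}).
\]

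Finally, because $s$ is a regular section of a rank $r$ vector bundle $V$, pushing forward the refined Gysin map gives the top Chern class formula $\iota_* \iota^{!} \alpha = c_r(V) \cap \alpha$ for any $\alpha \in CH_*(A \times B)$ (\cite[Ex.~14.1.1]{Ful}). Applied to $\alpha = A \times [B]^{vir}$ this yields $\iota_*[M]^{vir} = (A \times [B]^{vir}) . c_r(V)$, as desired. The main obstacle is the first step: checking that the exact triangle of perfect obstruction theories really satisfies the hypotheses for virtual pullback (equivalently, that it induces a closed embedding of intrinsic normal cones into the expected bundle stack). Once this compatibility is in place, the factorization of $\pi^{!}_{G^\bull}$ and the top Chern class identity are standard intersection-theoretic bookkeeping.
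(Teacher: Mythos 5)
Your proposal is correct in outline, but it takes a genuinely different route from the paper's proof. You invoke the functoriality of virtual classes for a compatible exact triangle of obstruction theories (Manolache's virtual pull-back, or the Behrend--Fantechi/Kim--Kresch--Pantev compatibility statement) to get $[M]^{vir} = \pi^!_{G^\bull}[B]^{vir}$, then observe that $G^\bull \simeq \LL_{M/B}$ since $\pi$ factors as a regular embedding followed by a smooth projection, so that the virtual pull-back is the ordinary lci pull-back $\iota^!\circ\pi_B^*$, and you finish with $\iota_*\iota^!\alpha = c_r(V)\cap\alpha$ for the regular section $s$. The paper avoids the cone-stack functoriality machinery altogether: it applies Siebert's formula $[M]^{vir} = \{s_\bull(F^{\bull\vee})\,c_F(M)\}_v$ from \cite{Sie} (valid because $M$ is projective), computes $s_\bull(F^{\bull\vee})$ from the triangle purely at the level of total Segre and Chern classes, computes Fulton's canonical class $c_F(M)$ via the exact sequence of cones for $M\subset A\times B\subset A\times C$, and finds that everything cancels to give $[M]^{vir}=\pi^*[B]^{vir}$ (flat pull-back, which is where the flatness hypothesis enters); the projection formula and $\iota_*[M]=c_r(V)$ then conclude. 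The trade-offs are real: your route uses the triangle as an actual morphism of triangles over $\pi^*\LL_B\to\LL_M\to\LL_{M/B}$ --- the compatibility you rightly flag as the one point requiring verification, since the lemma as stated only posits the existence of some exact triangle --- whereas Siebert's formula consumes the triangle only through its effect on K-theory classes (Segre classes), a strictly weaker input; in exchange, the paper's argument genuinely needs projectivity of $B$ and $M$ and flatness of $\pi$, neither of which your argument requires. Both become complete proofs once the respective citations are supplied.
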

\begin{proof}
The content of the lemma is formula \eqref{cycle}. For any perfect obstruction theory $F^\bull \rightarrow \LL_M$ with $M$ projective, the following formula holds \cite[Thm.~4.6]{Sie} (see also \cite{Pid})
\begin{equation} \label{Siebert}
[M]^{\vir} = \Big\{ s_\bull(F^{\bull \vee}) \, c_F(M) \Big\}_{v}.
\end{equation}
Here $s_\bull(\cdot)$ is the total Segre class, $v$ is the virtual dimension of $M$, and $c_F(M)$ is Fulton's canonical class which is defined as follows. Take any embedding $M \subset \cA$ into a smooth variety $\cA$, then
\[
c_F(M) := c_\bull(T_\cA|_M) \, s_\bull(C_{M/\cA}),
\]
where $C_{M/\cA}$ is the normal cone of $M \subset \cA$. This definition is independent of choice of embedding \cite[Ex.~4.2.6]{Ful}. Take an embedding $B \subset C$ into a smooth variety and consider
\[
M \subset A \times B \subset A \times C=: \cA.
\]
By \eqref{EFG} we have
\[
s_\bull(F^{\bull \vee}) = \pi^* (s_\bull(E^{\bull \vee})) \, \frac{c_\bull(V|_M)}{\pi_{A}^{*}(c_\bull(T_A))|_M}.
\]
Since $M \subset A \times B$ is cut out by a regular section of $V$, we have 
$$
C_{M / A \times B} \cong N_{M / A \times B} \cong V|_M.
$$ 
Consider the following short exact sequence of cones 
\[
N_{M / A \times B} \longrightarrow C_{M / A \times C} \longrightarrow C_{A \times B / A \times C}|_M.
\]
We deduce
\[
c_F(M) = \pi_{A}^{*} (c_\bull(T_A))|_M \, \pi^*(c_\bull(T_C|_B)) \, \frac{\pi^* s_\bull(C_{B / C})}{c_\bull(V|_M)}.
\]
Formula (\ref{Siebert}) therefore implies
\[
[M]^{\vir} = \Bigg\{ \pi^*\Big(s_\bull(E^{\bull \vee}) \, c_\bull(T_C|_B) \, s_\bull(C_{B / C}) \Big) \Bigg\}_{v} = \pi^*[B]^{\vir},
\]
where the second equality follows from applying (\ref{Siebert}) to $E^{\bull} \rightarrow \LL_B$. The projection formula gives
\[
\iota_* [M]^{\vir} = (A \times [B]^{\vir}) . \iota_*[M].
\]
Since $M \subset A \times B$ is cut out by a regular section of $V$, we have $\iota_*[M] = c_r(V)$ \cite[Prop.~14.1]{Ful} and the proposition is proved.
\end{proof}

\begin{proof} [Proof of Proposition \ref{nonredcycle}]
Diagram (89) of \cite[Appendix]{KT1} provides the required exact triangle. It is left to show $\Hilb^n(\cC / H_\beta) \rightarrow H_\beta$ is flat and the tautological section $\sigma$ of $\O(\cC)^{[n]}$ is regular. The fibre of the morphism $\Hilb^n(\cC / H_\beta) \rightarrow H_\beta$ over $C \in H_\beta$ is $C^{[n]}$, i.e.~the Hilbert scheme of $n$ points on the effective divisor $C$. The scheme $C^{[n]}$ is cut out by a tautological section of $L^{[n]}$ where $L:=\O(C)$. Moreover, $C^{[n]} \subset S^{[n]}$ has codimension $n$ (see \cite[Footnote 18]{KT1}, which uses \cite{AIK, Iar}). Therefore $\sigma|_{S^{[n]} \times \{C\}}$ is regular for all $C \in H_\beta$. From this one can deduce that $\Hilb^n(\cC / H_\beta) \rightarrow H_\beta$ is flat and $\sigma$ is regular.
\end{proof}

\subsection{Relation to Poincar\'e invariants}

In Section \ref{intro} we introduced the stable pair invariants \eqref{invgeneral}
$$
P_{\chi,\beta}(X, \tau_{\alpha_1}(\sigma_1) \cdots \tau_{\alpha_m}(\sigma_m))
$$
and the contribution to these invariants of the component $P_\chi(S,\beta) \cong \Hilb^n(\cC/H_\beta)$ of the $\C^*$-fixed locus 
$$
P_{\chi,\beta}(S, \tau_{\alpha_1}(\sigma_1) \cdots \tau_{\alpha_m}(\sigma_m)).
$$
We only consider the case of primary point insertions
$$
P_{\chi,\beta}(S, \tau_{0}(\pt)^m) = \int_{[P_\chi(S,\beta)]^{\vir}} \frac{1}{e(N^{\vir})} \, \tau_{0}(\pt)^m.
$$

In the case $n=0$, $\Hilb^n(\cC/H_\beta) \cong H_\beta$ and $[H_\beta]^{\vir}$ was introduced many years ago by D\"urr-Kabanov-Okonek \cite[Def.~3.1]{DKO}. They used this virtual cycle to define \emph{Poincar\'e invariants}. We recall their definition. Consider the two Abel-Jacobi maps 
\begin{align*}
&\AJ^+ : H_\beta \rightarrow \Pic^\beta(S), \\ 
&\AJ^- : H_{\k-\beta} \rightarrow \Pic^{\k-\beta}(S) \cong \Pic^\beta(S), 
\end{align*}
where $\Pic^{\k-\beta}(S) \cong \Pic^\beta(S)$, $L \mapsto L^* \otimes K_S$. Then 
the Poincar\'e invariants are
\begin{align} \label{Poininv} 
P^{+}_{S}(\beta) := \AJ^{+}_{*} \Big( \sum_i c_1( \O(\cC)|_{H_\beta \times \{\pt\}} )^i \cap [H_\beta]^{\vir} \Big), 
\end{align}
\begin{align*}
P^{-}_{S}(\beta) :=(-1)^{\chi(\O_S) + \frac{\beta(\beta-\k)}{2}} \AJ^{-}_{*} \Big( \sum_i (-1)^i c_1( \O(\cC)|_{H_{\k-\beta} \times \{\pt\}} )^i \cap [H_{\k-\beta}]^{\vir} \Big).
\end{align*}
In the first line, $\cC$ denotes the universal curve over $H_\beta$ and in the second line, the universal curve over $H_{\k-\beta}$. Note that\footnote{From the construction the Poincar\'e invariants take values in homology $H_*(\Pic^\beta(S)) \cong \Lambda^* H^1(S,\Z)$. We use Poincar\'e duality so the invariants take values in cohomology $\Lambda^* H^1(S,\Z)^*$.} $P^{\pm}_{S}(\beta) \in \Lambda^* H^1(S,\Z)^*$. We write the (numerical) degree $2q(S)$ part of $P^{+}_{S}(\beta) \in \Lambda^* H^1(S,\Z)^*$ by
$$
P_{S}(\beta) \in \Z.
$$
The product structure of the virtual cycle of Proposition \ref{nonredcycle} leads to Main Theorem \ref{mainthm} of the introduction:
\begin{proof}[Proof of Theorem \ref{mainthm}] 
We want to calculate the invariant
\begin{equation} \label{start}
P_{\chi,\beta}(S,\tau_0(\pt)^m) := \frac{1}{e(N^{\vir})} \ \tau_0(\pt)^m \cap [\Hilb^n(\cC/H_\beta)]^{\vir}, 
\end{equation}
where $\Hilb^n(\cC/H_\beta) \cong P_{\chi}(S,\beta)$, and $\chi$ and $n$ are related by $\chi = 1-h+n$ (Section \ref{intro}). Let $\varpi : \Hilb^n(\cC/H_\beta) \rightarrow H_\beta$ denote projection, then we claim 
\begin{equation} \label{tau0}
\tau_0(\pt) = \varpi^* c_1(\O(\cC)|_{H_\beta \times \{\pt\}}).
\end{equation}
The proof can be found in \cite[Proof Cor.~4.2]{KT2}, but we quickly reproduce it here. Consider the Cartesian diagram
\begin{displaymath}
\xymatrix
{
S & \Hilb^n(\cC/H_\beta) \times S \ar^>>>>{\pi_P}[r] \ar[d] \ar_<<<{\pi_S}[l] & \Hilb^n(\cC/H_\beta) \ar[d] \\
& H_\beta \times S \ar[r] & H_\beta.
}
\end{displaymath}
By the definition \eqref{deftau}, $\tau_0(\pt) = \pi_{P*}(\pi_{S}^{*}[\pt] \cdot c_1(\FF))$, where $\FF$ is the universal sheaf on $\Hilb^n(\cC/H_\beta) \times S$. Hence \eqref{tau0} follows from the fact that $c_1(\FF)$ is the pull-back of $c_1(\O(\cC))$ from $H_\beta \times S$ and going around the Cartesian diagram.

In order to calculate $e(N^{\vir})$, we use a formula for the $\C^*$-equivariant $K$-theory class of $N^{\vir}$ from \cite{KT2}. Consider the projections
\begin{displaymath}
\xymatrix
{
& S^{[n]} \times H_\beta \ar_{p_1}[dl] \ar^{p_2}[dr] & \\
S^{[n]} & & H_\beta.
}
\end{displaymath} 
Then \cite[Eqn.~(12)]{KT2} reads
\begin{equation} \label{NvirfromKT2}
[N^{\vir}] = \big[ \big( \O(\cC)^{[n]} \big)^* - p_{1}^{*} \, \Omega_{S^{[n]}} - p_{2}^{*} \big( R\pi_{*}\O_{\cC}(\cC) \big)^\vee \big]\Big|_{\Hilb^n(\cC/H_\beta)} \otimes \mathfrak{t},
\end{equation}
where $\mathfrak{t}$ is the irreducible representation of $\C^*$ of weight 1. Recall from \eqref{familytaut} that $\O(\cC)^{[n]}$ is a vector bundle on $S^{[n]} \times H_\beta$, $R\pi_{*}\O_{\cC}(\cC)$ is a complex on $H_\beta$, and $\pi$ denotes projection $H_\beta \times S \rightarrow H_\beta$. By pushing forward along the inclusion $\iota : \Hilb^n(\cC/H_\beta) \hookrightarrow S^{[n]} \times H_\beta$ and using \eqref{start}, \eqref{tau0}, \eqref{NvirfromKT2}, we see that $P_{\chi,\beta}(S,\tau_0(\pt)^m)$ equals
$$
\frac{e(p_{1}^{*} \, \Omega_{S^{[n]}} \otimes \mathfrak{t}) \cdot \, e\big( p_{2}^{*} \big(R\pi_{*}\O_{\cC}(\cC)\big)^\vee \otimes \mathfrak{t}\big)}{e\big( \big( \O(\cC)^{[n]} \big)^* \otimes \mathfrak{t} \big)} \cdot \varpi^* c_1(\O(\cC)|_{H_\beta \times \{\pt\}})^m \, \cap \, \iota_* [\Hilb^n(\cC/H_\beta)]^{\vir}.
$$
Next we want to use the formula for $\iota_* [\Hilb^n(\cC/H_\beta)]^{\vir}$ from Proposition \ref{nonredcycle}. Recall from the assumptions of the theorem that $m:=\frac{\beta(\beta-\k)}{2}$. Since the virtual dimension of $[H_\beta]^{\vir}$ is also $\frac{\beta(\beta-\k)}{2}$, the cycle
\begin{equation*} \label{eqn1}
c_1(\O(\cC)|_{H_\beta \times \{\pt\}})^m \cap [H_\beta]^{\vir}
\end{equation*}
is 0-dimensional and can be written as $\sum_i \mu_i \, p_i$, where $\mu_i$ are integers and $p_i = [C_i] \in H_\beta$ are points. Then 
$$
P_S(\beta) = \sum_i \mu_i
$$
by definition of the Poincar\'e invariants \eqref{Poininv}. Therefore $P_{\chi,\beta}(S,\tau_0(\pt)^m)$ equals
\begin{equation*} \label{long}
\sum_i \mu_i \int_{S^{[n]}} \frac{e(p_{1}^{*} \, \Omega_{S^{[n]}} \otimes \mathfrak{t}) \cdot \, e\big( p_{2}^{*} \big(R\pi_{*}\O_{\cC}(\cC)\big)^\vee \otimes \mathfrak{t}\big)}{e\big( \big( \O(\cC)^{[n]} \big)^* \otimes \mathfrak{t} \big)} \ c_n\big(\O(\cC)^{[n]}\big) \Bigg|_{S^{[n]} \times \{p_i\}}.
\end{equation*}
In order to go from equivariant Euler classes to Chern classes we use the following formula (e.g.~\cite[Eqn.~(16)]{KT2}). For any complex $E$ of rank $r$
\begin{equation} \label{EulerChern}
e(E \otimes \mathfrak{t}) = t^r c_{-1/t}(E^\vee),
\end{equation}
where $c_{x}(E) = 1 + c_1(E)x + c_2(E)x^2+ \cdots$ is the total Chern class and $t := c_1(\mathfrak{t})$ is the equivariant parameter. Define $L_i := \O_S(C_i)$, where $p_i = [C_i] \in H_\beta$ was introduced earlier in the proof. Then \eqref{tautp} implies
\begin{equation} \label{tautpt}
\O(\cC)^{[n]} \Big|_{S^{[n]} \times \{p_i\} } \cong L_{i}^{[n]}.
\end{equation}
Similarly 
\begin{equation} \label{RGammapt}
p_{2}^{*} \, R\pi_{*}\O_{\cC}(\cC) \Big|_{S^{[n]} \times \{p_i\} }  \cong R\Gamma(\O_{C_i}(C_i)) \otimes \O.
\end{equation}
Using \eqref{EulerChern}, \eqref{tautpt}, \eqref{RGammapt} shows that $P_{\chi, \beta}(S,\tau_0(\pt)^m)$ equals
\begin{align}
&\sum_i \mu_i \int_{S^{[n]}} \frac{t^{2n} \, c_{-1/t}(T_{S^{[n]}}) \cdot t^{1-h+\beta^2} \, c_{-1/t}(R\Gamma(\O_{C_i}(C_i)) \otimes \O)}{ t^{n} \, c_{-1/t}(L_{i}^{[n]})} \, c_n(L_{i}^{[n]}) \nonumber \\
&= \sum_i \mu_i \int_{S^{[n]}}  t^{n+m} \Bigg( \frac{-1}{t} \Bigg)^n \frac{c_{\mdot}(T_{S^{[n]}})}{c_{\mdot}(L_{i}^{[n]})} \, c_n(L_{i}^{[n]}) \nonumber \\
&= (-1)^n \, t^m \, \sum_i \mu_i \, \int_{S^{[n]}} \frac{c_{\mdot}(T_{S^{[n]}})}{c_{\mdot}(L^{[n]}_{i})} \, c_n(L^{[n]}_{i}), \label{integrals}
\end{align}
where the second equality uses $m := \frac{\beta(\beta-\k)}{2}$ and the factor $(-1/t)^n$ arises from the fact that $c_n(L_{i}^{[n]})$ has degree $n$ and $S^{[n]}$ has dimension $2n$.

By \cite{EGL}, for each $n$ there exists a universal polynomial $P_n(x_1,x_2,x_3,x_4)$ such that for all $i$ we have
$$
P_n(c_1(L_i)^2, c_1(L_i).\k, \k^2, c_2(S)) = \int_{S^{[n]}} c_n(L_{i}^{[n]}) \, \frac{c_{\mdot}(T_{S^{[n]}})}{c_{\mdot}(L_{i}^{[n]})}.
$$
Since $c_1(L_i) = \beta$ for all $i$, all these integrals are the same. Using $P_S(\beta) = \sum_i \mu_i$, formula \eqref{integrals} becomes
\begin{equation} \label{finalint}
P_{\chi,\beta}(S,\tau_0(\pt)^m) = (-1)^n \, t^m \, P_S(\beta) \, \int_{S^{[n]}} c_n(L^{[n]})  \, \frac{c_{\mdot}(T_{S^{[n]}})}{c_{\mdot}(L^{[n]})},
\end{equation}
where $L:=L_i$ for arbitrary choice of $i$. 

For \emph{any} $S,L$, the integral in \eqref{finalint} is given by $P_n(c_1(L)^2, c_1(L).\k, \k^2, c_2(S))$. For any $S,L$ with the additional property that $L$ is globally generated, we can compute the integral in \eqref{finalint}. If $L$ is globally generated, we can write $L = \O(C)$ for a smooth curve $C \subset S$. Then the Hilbert scheme $C^{[n]}$ of $n$ points on $C$ is cut out smoothly and transversally by a tautological section of $L^{[n]}$. Hence
$$
\int_{S^{[n]}} c_n(L^{[n]})  \, \frac{c_{\mdot}(T_{S^{[n]}})}{c_{\mdot}(L^{[n]})} = \int_{C^{[n]}} c_n(T_{C^{[n]}}) = e(C^{[n]}).
$$
These Euler characteristics are given by the well-known expression
\[
\sum_{n=0}^{\infty} e(C^{[n]}) q^n = (1-q)^{2g-2},
\]
where $g$ is the genus of $C$. We conclude that
\begin{align} 
\begin{split} \label{eqs}
&P_n(c_1(L)^2, c_1(L).\k, \k^2, c_2(S)) = \int_{S^{[n]}} c_n(L^{[n]}) \,  \frac{c_{\mdot}(T_{S^{[n]}})}{c_{\mdot}(L^{[n]})} = (-1)^n \binom{2g-2}{n}, \\
&\textrm{where \ } 2g-2 = c_1(L)^2 + c_1(L).\k.
\end{split}
\end{align}
Since \eqref{eqs} holds for any $S,L$ with $L$ globally generated and $P_n$ is polynomial, it holds for \emph{any} $S,L$. The theorem follows by combining \eqref{finalint} and \eqref{eqs}.
\end{proof}

\subsection{Application to Seiberg-Witten invariants}

D\"urr-Kabanov-Okonek conjectured that Poincar\'e invariants \eqref{Poininv} are equal to Seiberg-Witten invariants from 4-manifold theory \cite[Conj.~5.3]{DKO}. Using a wall-crossing formula and blow-up formula for $P^{\pm}_{S}(\beta)$, they reduced their conjecture to a purely algebraic statement about $H_{\k}$, which was proved by Chang-Kiem \cite{CK}.
By these (non-trivial!) results, we can write the degree $2q(S)$ part of $P^{+}_{S}(\beta)$ as
$$
P_S(\beta) = SW(\beta) \in \Z,
$$
where $SW(\beta)$ are the original Seiberg-Witten invariant of $S, \beta$ (see \cite{Wit, Moo}). Combining the Poincar\'e/PT correspondence of Theorem \ref{mainthm} with the (much deeper!) Poincar\'e/SW correspondence of \cite{DKO, CK} gives Corollary \ref{maincor}. An application of this corollary is that for $S,\beta$ with $\beta$ \emph{irreducible} and $m = \frac{\beta(\beta-\k)}{2}$ point insertions the GW/PT correspondence encodes (a very special case of) Taubes' GW/SW correspondence \cite{Tau1, Tau2}. This is the content of Theorem \ref{GW=SW} of the introduction:
\begin{proof}[Proof of Theorem \ref{GW=SW}]
Since $\beta$ is irreducible, $P_\chi(X,\beta)^{\C^*} \cong P_\chi(S,\beta)$ for all $\chi$. Hence $P_\beta(X,\tau_0(\pt)^m) = P_\beta(S,\tau_0(\pt)^m)$ and the result follows from Theorem \ref{mainthm}. Note that the equivariant parameter $t$ of the leading term of both generating functions match by \cite[Lem.~3.3]{KT1}.
\end{proof}

\begin{remark} \label{veryample}
The following is a variation on Theorem \ref{GW=SW}. Fix any $S, \beta$ such that $-K_S$ is nef and $\beta$ is sufficiently ample\footnote{I.e.~$\beta$ such that $h \geq 1$ and $\beta$ is $(4h-3)$-very ample \cite[Prop.~5.1]{KT2}.}. Assume the GW/PT correspondence\footnote{The GW/PT correspondence has been proved in many cases \cite{MOOP, MPT, PP1, PP2}.} holds for $\sfZ_{\beta}^{GW}(X,\tau_0(\pt)^m)$, $\sfZ_{\beta}^{P}(X,\tau_0(\pt)^m)$. Also assume that the BPS spectrum of $X$ is finite\footnote{I.e.~after writing $\sfZ^{GW}_{\beta}(X,\tau_0(\pt)^m)$ in BPS form \cite{GV1, GV2}, \cite[Eqn.~(3.13)]{PT1}, we assume there are only finitely many nonzero $n_{g,\beta'}$.}. Then
\begin{align*}
&\sfZ_{\beta}^{GW}(X,\tau_0(\pt)^m) = t^m \, SW(\beta) \, (2 \sin(u/2))^{2h-2}, \\
&SW(\beta) = \int_{[\overline{M}_{h,m}^{\prime}(S,\beta)]^{\vir}} \prod_{i=1}^m \mathrm{ev}_{i}^{*}[\pt].
\end{align*}

The proof goes as follows. Since $h \geq 1$ and the BPS spectrum is assumed finite, applying the coordinate transformation $-q = e^{iu}$ to $\sfZ^{GW}_{\beta}(X,\tau_0(\pt)^m)$ gives a Laurent \emph{polynomial} in $q$. Moreover, it is symmetric under $q \leftrightarrow q^{-1}$, so of the form
\begin{equation} \label{Laurent}
a_b q^{-b} + a_{b-1} q^{-(b-1)} + \cdots + a_{b-1} q^{b-1} + a_b q^b,
\end{equation}
for some $b \geq 0$. By \cite[Prop.~5.1]{KT1}, we have $P_\chi(X,\beta)^{\C^*} \cong P_\chi(S,\beta)$ for all $\chi \leq h-1$. 
Combining this with Theorem \ref{mainthm} and (\ref{Laurent}) gives the result. \hfill $\oslash$
\end{remark}

\begin{remark}
One can speculate that for \emph{any} algebraic $S,\beta$, Taubes' GW/SW correspondence follows from the GW/PT correspondence. This requires dealing with other components of $P_\chi(X,\beta)^{\C^*}$. Conversely, one can try to derive cases of the GW/PT correspondence for $X = K_S$ from Taubes' GW/SW correspondence as is done in Theorem \ref{GW=SW}. These are interesting questions for future research. \hfill $\oslash$
\end{remark}

\section{Wall-crossing and duality} \label{walldual}

In this section we study the stable pair invariants $P_{\chi,\beta}(S,\tau_0(\pt)^m)$ for any $m$ and any surface $S$ with $p_g(S) = 0$. The results of \cite{KT2} (recalled in Theorem \ref{old} of Appendix \ref{AppA}) suggest that these invariants are \emph{always} given by universal functions in the topological numbers $\beta^2$, $\beta.c_1(S)$, $c_1(S)^2$, $c_2(S)$ and certain invariants of the ring structure of $H^1(S,\Z)$. In Appendix \ref{AppB} we show that this is \emph{not} the case. The reason is that $P_{\chi,\beta}(S,\tau_0(\pt)^m)$ is related to a Poincar\'e invariant by Main Theorem \ref{mainthm} and it is easy to cook up surfaces $S$ with $p_g(S) = 0$ whose Poincar\'e invariants are \emph{not} given by universal functions (Examples \ref{example1}, \ref{example2}, \ref{example3} of Appendix \ref{AppB}).  

However, D\"urr-Kabanov-Okonek prove that when $p_g(S) = 0$ the difference of Poincar\'e invariants in class $\beta$ and $\k - \beta$ satisfies a universal formula. Combining their formula with Main Theorem \ref{mainthm} gives an expression for the difference of $P_{\chi,\beta}(S,\tau_0(\pt)^m)$ and $P_{\chi,\k - \beta}(S,\tau_0(\pt)^m)$. This is Theorem \ref{main} of the introduction and the second application of Main Theorem \ref{mainthm}.

\subsection{D\"urr-Kabanov-Okonek's wall-crossing}

We recall the wall-crossing formula for Poincar\'e invariants \cite[Thm.~3.16]{DKO}. Since we use this formula to establish Theorem \ref{main}, and for the sake of completeness, we recall D\"urr-Kabanov-Okonek's interesting argument. Moreover, their results lead to a nice observation about the \emph{reduced} virtual cycle for stable pairs, which is of independent interest (Proposition \ref{redcycle} in Appendix \ref{AppA}). The results and arguments presented in this section come \emph{entirely} from their paper \cite{DKO}.

The following is contained in \cite[Lem.~2.17]{DKO} and its proof (see also \cite[Cor.~3.15]{DKO}).
\begin{proposition}[D\"urr-Kabanov-Okonek] \label{chi=0}
Let $S$ be any surface. Suppose that $\beta$ satisfies the following conditions:
\begin{enumerate}
\item[(i)] For any \emph{effective} $L \in \Pic^\beta(S)$ with $c_1(L) = \beta$ we have $H^2(L) = 0$. Note: this is automatic when $p_g(S) = 0$.
\item[(ii)] $\beta(\beta - \k) \geq 0$.
\item[(iii)] $H_\beta$ and $H_{\k -\beta}$ are both non-empty.
\end{enumerate}
Then $\beta(\beta-\k)=0$ and $\chi(\O_S) = 0$. Note: $\chi(\O_S)=0$ is equivalent to $q(S)=1$ when $p_g(S) = 0$. 
\end{proposition}
\begin{proof}
The result follows by showing 
$$
\frac{\beta(\beta-\k)}{2} + \chi(\O_S) = 0 \ \textrm{and} \ \chi(\O_S) \geq 0.
$$
Let $p : \Pic^\beta(S) \times S \rightarrow \Pic^\beta(S)$ denote projection and let $\cP$ be a choice of Poincar\'e bundle on $\Pic^\beta(S) \times S$.

Condition (i) is equivalent to the statement that the images (Brill-Noether loci) of the two maps $H_{\k-\beta} \rightarrow \Pic^{\k-\beta}(S) \cong \Pic^\beta(S)$ and $H_\beta \rightarrow \Pic^\beta(S)$ are disjoint. In other words, their complements $U$ and $V$ satisfy $\Pic^\beta(S) = U \cup V$. Moreover, for any $L \in \Pic^\beta(S)$, we have $H^2(L) = 0$ when $L \in U$ and $H^0(L) = 0$ when $L \in V$. In other words 
\begin{equation*} \label{vanishing}
R^2 p_* \cP |_U = 0, \ R^0 p_* \cP |_V = 0.
\end{equation*}
This implies 
$$
\rk Rp_* \cP = \frac{\beta(\beta-\k)}{2} + \chi(\O_S) \leq 0.
$$

Since $H_\beta$ and $H_{\k -\beta}$ are both non-empty (Condition (iii)), $S$ cannot be rational because otherwise we get a section of $K_S$. Moreover, $S$ cannot be ruled: for $F$ the class of a fibre either $\beta.[F] < 0$ in which case $H_\beta = \varnothing$ or $\beta.[F] \geq 0$ in which case $(\k-\beta).[F] < 0$ so $H_{\k-\beta} = \varnothing$. Similarly, $S$ cannot be the blow-up of a ruled surface. We conclude that the Kodaira dimension of $S$ is $\geq 0$. Therefore $\chi(\O_S) \geq 0$ and 
\begin{equation*}
\frac{\beta(\beta-\k)}{2} + \chi(\O_S) \geq 0. \qedhere
\end{equation*}
\end{proof}

\begin{theorem}[D\"urr-Kabanov-Okonek] \label{wall-crossing}
Let $S$ be a surface with $p_g(S) = 0$. Let $\cP$ be a choice of normalized Poincar\'e bundle on $\Pic^\beta(S)$, i.e.~$\cP|_{\Pic^\beta(S) \times \{\pt\}} \cong \O$. Denote projection by $p : \Pic^\beta(S) \times S \rightarrow \Pic^\beta(S)$. Then
\[
P^{+}_{S}(\beta) - P^{-}_{S}(\beta) = \sum_{i \geq 1 - \chi(\beta)} s_i(p_! \cP),
\] 
where $1-\chi(\beta) = q(S) - \frac{\beta(\beta-\k)}{2}$.
\end{theorem}
\begin{proof}
We first note that $\beta$ satisfies Condition \eqref{strong} of the introduction if and only if $H_{\k - \beta} = \varnothing$. Indeed if $\beta$ satisfies Condition \eqref{strong} we clearly have $H_{\k - \beta} = 0$. Conversely $H_{\k - \beta} = \PP(R^2 p_* \cP)$ by \cite[Lem.~2.15]{DKO}, so if $H_{\k - \beta} = \varnothing$ we have $R^2 p_* \cP = 0$ and hence $\beta$ satisfies Condition \eqref{strong} by cohomology and base change. Similarly $\k - \beta$ satisfies Condition \eqref{strong} if and only if $H_\beta = \varnothing$ (using $H_{\beta} = \PP(R^2 p_* \cP^*(K_S))$ \cite[Lem.~2.15]{DKO}).

The rest of the proof of \cite{DKO} runs as follows. If $\frac{\beta(\beta-\k)}{2} < 0$, then the virtual dimension of $H_\beta$ and $H_{\k-\beta}$ are negative so the LHS is zero. Moreover the RHS is zero because of degree reasons ($\Pic^\beta(S)$ has dimension $q(S)$).
For the remainder of the proof assume $\frac{\beta(\beta-\k)}{2} \geq 0$. 

Let $\cP$ be a choice of Poincar\'e bundle on $\Pic^\beta(S) \times S$ and let 
$$
p : \Pic^\beta(S) \times S \rightarrow \Pic^\beta(S)
$$ 
denote projection. 
In Appendix \ref{AppA} we describe a construction, which embeds $H_\beta$ into a smooth ambient space in a natural way. For \emph{sufficiently ample} divisor $A \subset S$ define $\gamma := [A] + \beta$ and let $\cQ$ be a choice of Poincar\'e bundle on $\Pic^\gamma(S) \times S$. Again we denote projection by $p : \Pic^\gamma(S) \times S \rightarrow \Pic^\gamma(S)$. By sufficient ampleness of $A$, the Abel-Jacobi map
$$
\AJ : H_\gamma \longrightarrow \Pic^\gamma(S)
$$
is a projective bundle and $H_\gamma \cong \PP(p_* \cQ)$. Moreover we can embed $H_\beta \hookrightarrow H_\gamma$ by adding the divisor $A$. There exists a natural sheaf $F$ on $H_\gamma$ with tautological section cutting out $H_\beta \hookrightarrow H_\gamma$. Since $p_g(S) = 0$, the sheaf $F$ is a vector bundle on a Zariski open neighbourhood of $H_\beta$. See Appendix \ref{AppA} for the details. 
Let $r$ be the rank of $F$ and let $h := c_1(\O(1))$ on $\PP(p_* \cQ)$. If $H_\beta \neq \varnothing$, then 
$$
\iota_* [H_{\beta}]^{\vir} = c_r(F)
$$ 
on $H_\gamma$ (Proposition \ref{redcycle} of Appendix \ref{AppA} for $n=0$) and 
\begin{equation} \label{formulaAJ}
\AJ_* \big( c_1(\O(\cC)|_{H_\beta \times \{\pt\}})^i \cap [H_\beta]^{\vir} \big) = \AJ_*(c_r(F) h^i).
\end{equation}
A similar formula holds for $[H_{\k-\beta}]^{\vir}$ when $H_{\k-\beta} \neq \varnothing$. Moreover by \cite[Prop.~2.18]{DKO} (or \cite[Lem.~4.3]{KT2})
\begin{equation} \label{Segre}
\AJ_*(c_r(F) h^i)  =  s_{i-\chi(\beta)+1}(\tau_{\leq 1} p_! \cP),
\end{equation}
where $\chi(\beta)$ denotes the holomorphic Euler characteristic of $\beta$. Equation \eqref{Segre} also holds when $H_\beta = \varnothing$. 

If $\beta$ satisfies Condition (\ref{strong}) (i.e.~$H_{\k-\beta} = \varnothing$), then $F$ is a vector bundle on $H_\gamma$, and $s_i(p_! \cP) = s_i(\tau_{\leq 1} p_! \cP)$. The formula follows from \eqref{Segre} and \eqref{formulaAJ}. If $\k-\beta$ satisfies Condition (\ref{strong}) (i.e.~$H_\beta = \varnothing$), then the formula follows similarly using Serre duality $Rp_*\cP^*(K_S) \cong (Rp_* \cP [2])^\vee$. 

We are left with the case where neither $\beta$ nor $\k-\beta$ satisfies Condition (\ref{strong}), i.e.~$H_\beta$ and $H_{\k - \beta}$ are both non-empty. The wall-crossing formula is equivalent to
\[
\sum_{i \geq 1 - \chi(\beta)} \Big(s_i(\tau_{\leq 1} p_! \cP) + (-1)^i s_i( \tau_{\leq 1} p_! \cP^*(K_S)) \Big) = \sum_{i \geq 1 - \chi(\beta)} s_i(p_! \cP).
\] 

By Proposition \ref{chi=0}, $\beta(\beta-\k) \geq 0$ and $H_\beta$, $H_{\k - \beta}$ are both non-empty implies $\chi(\O_S) = 0$ and $\beta(\beta-\k)=0$. Since $p_g(S)=0$, we have $q(S)=1$. Since $s_1(\tau_{\leq 1} p_! \cP) = c_1(R^1 p_* \cP) - c_1(p_* \cP)$, it suffices to show 
$$
s_1(\tau_{\leq 1} p_! \cP^*(K_S)) = c_1(R^2 p_* \cP).
$$ 

Take a locally free resolution $[E^0 \stackrel{d^0}{\rightarrow} E^1 \stackrel{d^1}{\rightarrow} E^2]$ of $R p_* \cP$. Then Serre duality $Rp_*\cP^*(K_S) \cong (Rp_* \cP [2])^\vee$ implies
\begin{align*}
s_1(\tau_{\leq 1} p_! \cP^*(K_S)) &= c_1( \ker (d^{0*})) - c_1(E^{2*}) = c_1(E^2) + c_1((\coker d^0)^*), \\
c_1(R^2 p_* \cP) &= c_1(E^2) - c_1(\im d^1) = c_1(E^2) + c_1((E^1 / \ker d^1)^*).
\end{align*}
In the proof of Proposition \ref{chi=0} we saw that $R^1 p_* \cP$ is torsion. Dualizing the short exact sequence
\[
0 \rightarrow R^1 p_* \cP \rightarrow \coker d^0 \rightarrow E^1 / \ker d^1 \rightarrow 0
\]
shows $(\coker d^{0})^* \cong (E^1 / \ker d^1)^*$. This proves the desired result.
\end{proof}

\begin{proof}[Proof of Theorem \ref{main}]
Fix $S,\beta$ such that $p_g(S)=0$ and neither $\beta$ nor $\k-\beta$ satisfies Condition (\ref{strong}) of the introduction. If $\beta(\beta-\k)<0$, the virtual dimensions of $[H_\beta]^{\vir}$ and $[H_{\k-\beta}]^{\vir}$ are zero and we use Proposition \ref{nonredcycle}. Assume $\beta(\beta-\k)\geq 0$. By Proposition \ref{chi=0} this implies $q(S)=1$ and $\beta(\beta-\k)=0$. By Proposition \ref{nonredcycle}, the invariants are zero when point insertions are present ($m>0$). In the case $m=0$, Theorem \ref{mainthm} implies
\begin{align*}
\sfZ_{\beta}^{P}(S) &=  P^{+}_{S}(\beta) \, (q^{1/2} + q^{-1/2})^{2 \beta^2}, \\
\sfZ_{\k-\beta}^{P}(S) &=  P^{+}_{S}(\k-\beta) \, (q^{1/2} + q^{-1/2})^{2 (\k-\beta)^2} = P^{-}_{S}(\beta)  (q^{1/2} + q^{-1/2})^{2(\k-\beta)^2}.
\end{align*}
The result follows from D\"urr-Kabanov-Okonek's wall-crossing formula Theorem \ref{wall-crossing} and a Grothendieck-Riemann-Roch computation giving $s_1(p_! \cP) = \frac{1}{2}[2\beta-\k]$.
\end{proof}

\appendix 

\section{Reduced stable pair invariants} \label{AppA}


Recall from Section \ref{vircycle} that the natural embedding
\[
\Hilb^n(\cC / H_\beta) \hookrightarrow S^{[n]} \times H_\beta
\]
can be realized as the zero locus of a tautological section of the vector bundle $\O(\cC)^{[n]}$ on $S^{[n]} \times H_\beta$ (see \eqref{familytaut}). As we discussed, this induces a relative perfect obstruction theory on $\Hilb^n(\cC / H_\beta)$. We mentioned how the absolute perfect obstruction theory on $H_\beta$ of D\"urr-Kabanov-Okonek was used in \cite{KT1} to construct an absolute perfect obstruction theory on $\Hilb^n(\cC/H_\beta)$.

The Hilbert scheme $H_\beta$ has \emph{another} perfect obstruction theory also originally discovered by D\"urr-Kabanov-Okonek \cite{DKO}. This perfect obstruction theory comes from embedding $H_\beta$ in a compact smooth ambient space as follows. Let $A$ be a sufficiently ample divisor\footnote{It suffices to pick $A$ such that $H^1(L(A)) = H^2(L(A)) = 0$ for all $L \in \Pic^\beta(S)$.} and define $\gamma := [A] + \beta$. Then the Abel-Jacobi map makes $H_{\gamma}:=\Hilb_{\gamma}(S)$ into a projective bundle over the Picard variety $\Pic^\gamma(S)$. In particular, $H_\gamma$ is smooth. Consider the closed embedding
\[
H_\beta \hookrightarrow H_\gamma, \ C \mapsto A \cup C.
\]
A point $D$ lies in the image of this map if and only if it contains $A$, i.e.
\[
s_D|_A = 0 \in H^0(\O_{A}(D)),
\]  
where $s_D$ denotes the section cutting out $D \subset S$. The family version of this goes as follows. Let $\D \rightarrow H_\gamma$ be the universal curve and $\pi : H_\gamma \times S \rightarrow H_\gamma$ projection. Then the \emph{sheaf} 
\begin{equation} \label{F}
F:=\pi_* (\O(\D)|_{H_\gamma \times A})
\end{equation}
has a tautological section with zero locus $H_\beta$. Suppose that $\beta$ satisfies the following condition (Condition (i) of Proposition \ref{chi=0})
\begin{equation} \label{weak}
H^2(L) = 0 \ \mathrm{for \ all \ } \textit{effective} \mathrm{ \ line \ bundles} \  L \ \mathrm{with} \ c_1(L) = \beta.
\end{equation}
Note that this condition is weaker than Condition \eqref{strong} of the introduction. Then $H^1(\O_A(A+C)) = 0$ for any $C \in H_\beta$. By semicontinuity and base change, $R^1 \pi_* (\O(\D)|_{H_\gamma \times A})$ is zero on a Zariski open neighbourhood of $H_\beta$. Hence $F$ is a vector bundle on a Zariski open neighbourhood of $H_\beta$. This construction gives a perfect obstruction theory on $H_\beta$ which we refer to as the \emph{reduced} perfect obstruction theory (this terminology was not used by D\"urr-Kabanov-Okonek). We denote the corresponding virtual cycle by $[H_\beta]^{\red}$. The reduced perfect obstruction theory on $H_\beta$ can be combined with the relative perfect obstruction theory on $\Hilb^n(\cC/H_\beta)$ to give another absolute perfect obstruction theory on $\Hilb^n(\cC/H_\beta)$. This was carried out in \cite[Appendix]{KT2}.  It turns out that the resulting virtual cycle $[\Hilb^n(\cC/H_\beta)]^{\red}$ coincides with the one coming from $\C^*$-localization of \emph{reduced} stable pair theory of $X=K_S$ to the component $\Hilb^n(\cC/H_\beta)$ of the $\C^*$-fixed locus \cite[Appendix]{KT2}. Note that Condition \eqref{weak} is automatic when $p_g(S) = 0$. In this case one can show that $[H_\beta]^{\red} = [H_\beta]^{\vir}$ and $[\Hilb^n(\cC / H_\beta)]^{\red} = [\Hilb^n(\cC / H_\beta)]^{\vir}$.

If $\beta$ satisfies the stronger condition  
\begin{equation*} 
H^2(L) = 0 \ \mathrm{for \ all \ line \ bundles} \  L \ \mathrm{with} \ c_1(L) = \beta,
\end{equation*}
i.e.~Condition \eqref{strong} of the introduction, then $R^1 \pi_* (\O(\D)|_{H_\gamma \times A}) = 0$ on $H_\gamma$ and $F$ is a vector bundle on $H_\gamma$. Denote the embedding 
$$
\Hilb^n(\cC / H_\beta) \hookrightarrow S^{[n]} \times H_\gamma
$$ 
by $\iota$. Similarly to \eqref{familytaut}, define
\begin{equation*} \label{familytaut2}
\O(\D-A)^{[n]} := \pi_* \Big( \O\big(S^{[n]} \times \D - S^{[n]} \times A \times H_\gamma\big) \big|_{\cZ \times H_\gamma} \Big), 
\end{equation*}
where $\pi : S^{[n]} \times S \times H_\gamma \rightarrow S^{[n]} \times  H_\gamma$ denotes projection. When Condition \eqref{strong} holds one can compute the virtual cycle as follows \cite[Thm.~A.7]{KT1}
\begin{equation} \label{vircyclestrong}
\iota_* [\Hilb^n(\cC / H_\beta)]^{\red} = c_r(F) . c_n(\O(\D - A)^{[n]}),
\end{equation}
where $r:=\chi(\beta(A)) - \chi(\beta)$. Here $\chi(\beta)$ is the holomorphic Euler characteristic of curves in $H_\beta$ 
\[
2\chi(\beta) = \beta(\beta -\k) + 2\chi(\O_S)
\]
and $\chi(\beta(A))$ is defined similarly. The virtual dimension of $[\Hilb^n(\cC / H_\beta)]^{\red}$ is
\[
v = \frac{\beta(\beta-\k)}{2}+p_g(S)+n,
\]
which is $p_g(S)$ larger than the virtual dimension of $[\Hilb^n(\cC / H_\beta)]^{\vir}$. 

When only the weaker Condition \eqref{weak} is satisfied we make the following somewhat surprising observation, which is more or less an immediate corollary of \cite[Lem.~2.17]{DKO}. 
\begin{proposition} \label{redcycle}
Fix $S, \beta$ such that Condition (\ref{weak}) is satisfied, $H_\beta \neq \varnothing$, and $\beta(\beta - \k) \geq 0$. Then $F$ is a vector bundle on $H_\gamma$ even though $R^1 \pi_* (\O(\D)|_{H_\gamma \times A})$ is in general non-zero. Consequently
\[
\iota_* [\Hilb^n(\cC / H_\beta)]^{\red} = c_r(F) . c_n(\O(\D - A)^{[n]})
\] 
and its virtual dimension is $v = \frac{\beta(\beta-\k)}{2}+p_g(S)+n$.
\end{proposition}
\begin{proof}
Let $p : \Pic^\beta(S) \times S \rightarrow \Pic^\beta(S)$ be projection and let $\cP$ be a choice of Poincar\'e bundle on $\Pic^\beta(S) \times S$. Let
\[
\E:=[E^0 \stackrel{d^0}{\rightarrow} E^1 \stackrel{d^1}{\rightarrow} E^2]
\] 
be a resolution of $Rp_* \cP$ by locally free sheaves. Then D\"urr-Kabanov-Okonek found out that $\ker d^1$ is locally free (Claim). The reason for Claim is the following. If $H_{\k-\beta} = \varnothing$, then $R^2 p_* \cP = 0$ because $H_{\k - \beta} = \PP(R^2 p_* \cP)$ \cite[Lem.~2.15]{DKO}. In this case $d^1$ is surjective and $\ker d^1$ is locally free. Suppose $H_\beta, H_{\k-\beta}$ are both non-empty. Then we saw in Proposition \ref{chi=0}  and its proof (i.e.~\cite[Lem.~2.17]{DKO} and its proof) that
\begin{align*} 
&R^2 p_* \cP |_U = 0, \ R^0 p_* \cP |_V = 0, \\
& \Pic^\beta(S) = U \cup V, \ \rk Rp_* \cP = 0,
\end{align*}
where $U$, $V$ are the complements of the images of $H_{\k-\beta} \rightarrow \Pic^{\k-\beta}(S) \cong \Pic^\beta(S)$, $H_\beta \rightarrow \Pic^\beta(S)$. We see at once that $R^1 p_* \cP$ is torsion. Moreover $R^1 p_* \cP|_V$ is a subsheaf of $E^1 / \im d^0 |_V$. Also $E^1 / \im d^0 |_V$ is locally free because $(d^0)^*|_V$ is surjective. This implies $R^1 p_* \cP|_V$ is zero. Therefore $\ker d^1 |_V = \im d^0|_V = E^0|_V$ is locally free.
Since we already know $\ker d^1 |_U$ is locally free (because $d^1|_U$ is surjective), this establishes Claim.

Back to the resolution $\E$ of $Rp_* \cP$. Take $\E$ of the following form. Let $[E^1 \stackrel{d^1}{\rightarrow} E^2]$ be a resolution of $R p_* \cP_A(A)$ by locally free sheaves and set $E^0 := p_* \cP(A)$. Note that $p_* \cP(A)$ is locally free by choice of $A$. We define $\E$ by the following diagram of exact triangles
\[
\xymatrix
{
\E \ar@{-->}[r] \ar@{-->}[d] & E^0 \ar@{-->}[r] \ar@{=}[d] & [E^1 \stackrel{d^1}{\rightarrow} E^2] \ar^{\cong}[d] \\
R p_* \cP \ar[r] & p_*\cP(A) \ar[r] & Rp_* \cP_A(A).
}
\]
Here $\cP_A$ is short hand for $\cP|_{H_\beta \times A}$. By Claim, $p_* \cP_A(A) \cong \ker d^1$ is locally free. Next, let $\cQ$ be a choice of Poincar\'e bundle on $\Pic^\gamma(S)$. The Abel-Jacobi map
\[
\AJ : H_\gamma = \PP(p_* \cQ) \rightarrow \Pic^\gamma(S)
\]
is a projective bundle with tautological bundle $\O(1)$. Note that $\cQ(1) \cong \O(\D)$ on $H_\gamma \times S$, therefore \eqref{F}
\[
F = \pi_* \O(\D|_{H_\gamma \times A}) \cong \AJ^*( p_* \cQ_A ) (1).
\]
Since $\Pic^\beta(S) \cong \Pic^\gamma(S)$ sends $p_* \cP_A(A)$ to $p_* \cQ_A$ we indeed see that $F$ is locally free. Finally the proposition states that $R^1 \pi_* (\O(\D)|_{H_\gamma \times A})$ is in general non-zero. This is proved in Remark \ref{failure} below.
\end{proof}

If $S,\beta$ satisfies Condition (\ref{strong}), then 
the invariants $P_{\chi,\beta}^{\red}(S,\tau_0(\pt)^m)$ are calculated in \cite{KT2} in the following sense. Via wedging together and integrating over $S$, the classes $\beta, \k \in H^2(S,\Z)$, and $1 \in H^4(S,\Z)$ give elements
\[
[\beta], [\k] \in \Lambda^2 H^1(S,\Z)^*, \ \mathrm{and} \  [1] \in \Lambda^4 H^1(S,\Z)^*. 
\]
Wedging together any combination produces an element 
\[
\Lambda^a [\beta] \wedge \Lambda^b [\k] \wedge \Lambda^c [1] \in \Lambda^{2q(S)} H^1(S,\Z)^* \cong \Z, \ \mathrm{where} \ a+b+2c=q(S).
\]
Here the canonical isomorphism with $\Z$ comes from choosing any integral basis of $H^1(S,\Z) \subset H^1(S,\R)$ compatible with the orientation coming from the complex structure. We then have:
\begin{theorem} \cite[Thm.~1.2]{KT2} \label{old}
Fixing $q,p_g,m,n$, there exists a universal function $F_{q,p_g,m,n}(\mathbf{x})$ with variables $\mathbf{x}:=(x_1, x_2, x_3, x_4, \{x_{abc}\}_{a+b+2c = q},t)$ such that for any $S$ with $q(S)=q$, $p_g(S)=p_g$, and $\beta \in H^2(S,\Z)$ satisfying Condition (\ref{strong}), $P_{\chi,\beta}^{\red}(S,\tau_0(\pt)^m)$ is equal to
\[
F_{q,p_g,m,n}(\beta^2, \beta.\k, \k^2, c_2(S), \{\Lambda^a [\beta] \wedge \Lambda^b [\k] \wedge \Lambda^c [1]\}_{a+b+2c = q},t),
\]
where $\chi = 1-h+n$ and $2h-2=\beta(\beta+\k)$ is the arithmetic genus of $\beta$.
\end{theorem}

\begin{remark} \label{failure}
Suppose the setting is as in Proposition \ref{redcycle}. We now explain why $R^1 \pi_* (\O(\D)|_{H_\gamma \times A})$ is in general non-zero. In Proposition \ref{redcycle} we show that $F$ is a vector bundle and the reduced virtual cycle is given by \eqref{vircyclestrong} when the weaker Condition \eqref{weak} is satisfied. If $R^1 \pi_* (\O(\D)|_{H_\gamma \times A})$ were zero, then the invariants $P_{\chi,\beta}^{\red}(S,\tau_0(\pt)^m)$ satisfy the same universal formula as Theorem \ref{old} by the calculation of \cite{KT2}. However we show by explicit examples in Appendix \ref{AppB} that some invariants $P_{\chi,\beta}^{\red}(S,\tau_0(\pt)^m)$ do \emph{not} satisfy universality (Examples \ref{example1}, \ref{example2}, \ref{example3}).
\end{remark}
 
\section{Failure of universality: examples} \label{AppB}

In this appendix we show that Theorem \ref{old} does \emph{not} hold for all stable pair invariants $P_{\chi,\beta}^{\red}(S, \tau_0(\pt)^m)$, $P_{\chi,\beta}(S, \tau_0(\pt)^m)$. By Main Theorem \ref{mainthm} it suffices to prove $P_S(\beta)$ is not given by universal functions. We show this on elliptic surfaces with $p_g(S) = 0$ using calculations of D\"urr-Kabanov-Okonek \cite{DKO}.


Let $\pi : S \rightarrow C$ be an elliptic fibration over a curve of genus $g$ \cite[Ch.~V.6]{BPV}. We are only interested in the case $S, C$ are algebraic. The generic fibre $F$ is a smooth elliptic curve and we denote by $m_1 F_1$, $\ldots$, $m_r F_r$ the multiple fibres. The canonical divisor is given by
\begin{equation} \label{K}
K_S = \pi^*D + \sum_{i=1}^{r} (m_i - 1) F_i,
\end{equation}
for some divisor $D$ of degree $2g-2 + \chi(\O_S)$ on $C$ \cite[Cor.~12.3]{BPV}. In this section, we will make frequent use of logarithmic transformations \cite[Ch.~V.13]{BPV}. Given a generic
point $x \in C$, a logarithmic transformation replaces the fibre $F$ over $x$ by a multiple $m F$, $m >1$. The new elliptic fibration $\pi' : S' \rightarrow C$ has fibre $m F$ over $x \in C$ and the restrictions $\pi^{-1}(C \setminus \{x\})$, $\pi^{\prime -1}(C \setminus \{x\})$ are biholomorphic as fibre bundles over $C \setminus \{x\}$. One should not think of a logarithmic transformation as a sort of birational transformation. The topology of $S$ can change and $S$ can even become non-algebraic \cite[Ch.~V.13]{BPV}.

\begin{example} \label{example1}
Let $\PP^1 \subset |\O(3)|$ be a generic pencil of cubics on $\PP^2$ and let $S \rightarrow \PP^1$ be the universal curve. This is a rational elliptic fibration so $q(S)=p_g(S) = 0$ and $K_S = -F$ (Equation (\ref{K})). We take $\beta = 6\k$. Clearly $|6K_S| = \varnothing$ so $P_S(\beta) = 0$. Let $S'$ be obtained from $S$ by replacing one general fibre $F$ by a double fibre $2F_1$ and another by a triple fibre $3F_2$. Then $S'$ is one of the famous Dolgachev surfaces\footnote{The surfaces $S$, $S'$ provide homeomorphic compact simply connected 4-manifolds. S.~K.~Donaldson famously proved their $C^\infty$-structures are different \cite{Don}. One can also establish this by showing their Seiberg-Witten invariants are distinct (see \cite{Moo}).}. The surface $S'$ is known to be algebraic satisfying $q(S')=p_g(S') = 0$ and $K_{S'} = -F + F_1 + 2F_2$ (Equation (\ref{K})). In the Chow group, one has relations $2 F_1 = 3 F_2 = F$ so $\k' = \frac{1}{6} [F]$ in $H^2(S',\Q)$. Taking $\beta' = 6\k'$, we see that $|6K_{S'}| = |F| \neq \varnothing$, whereas $|K_{S'}-6K_{S'}| = |-5K_{S'}| = \varnothing$. Consequently, $P_{S'}(\k' - \beta')=0$. D\"urr-Kabanov-Okonek's wall-crossing formula (Theorem \ref{wall-crossing}) states $P_{S'}(\beta') - P_{S'}(\k'-\beta') = 1$, so $P_{S'}(\beta')=1$. Since the Chern numbers of $S,\beta$ and $S',\beta'$ are the same, this is a counter-example to universality. Note that this does not contradict Theorem \ref{old}. Indeed
 \[
H^2(\O(6K_S)) = H^0(\O(-5K_S)) = H^0(\O(5F)) \neq 0,
\]
so $\beta'$ satisfies Condition (\ref{strong}) but $\beta$ only satisfies the weaker Condition (\ref{weak}). \hfill $\oslash$
\end{example}

In order to find more counter-examples to universality, we use the following result \cite[Prop.~4.8]{DKO} (see also R.~Friedman and J.~W.~Morgan \cite[Prop.~4.4]{FM}).
\begin{proposition}[D\"urr-Kabanov-Okonek] \label{ellfib}
Suppose $\beta \in H^2(S,\Z)$ satisfies $\beta^2 = \beta . [F] = 0$. Then
\[
P_S(\beta) = \sum_{{\scriptsize{\begin{array}{c} d[F] + \sum_i a_i [F_i] = \beta \\ d \geq 0, \ 0 \leq a_i < m_i \end{array}}}} (-1)^d \binom{2g-2+\chi(\O_S)}{d}.
\]
\end{proposition}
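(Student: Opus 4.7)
The plan is to compute the Poincar\'e invariant $P^+_S(\beta)$ componentwise on $H_\beta$ and extract $SW(\beta)$ as its top-degree part. Under the hypothesis $\beta^2 = \beta \cdot [F] = 0$, equation (\ref{K}) combined with the identities $[F]^2 = [F_i] \cdot [F_j] = [F] \cdot \pi^* D = 0$ immediately gives $\beta \cdot k = 0$, so $\beta(\beta - k) = 0$. Proposition \ref{nonredcycle} (with $n = 0$) then says $[H_\beta]^{vir}$ has virtual dimension $0$, which forces the descendant insertions $c_1(\O(\cC)|_{H_\beta \times \{pt\}})^i$ for $i > 0$ to vanish. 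Consequently $SW(\beta)$ equals the degree of $AJ^+_* [H_\beta]^{vir}$, a zero-dimensional virtual count on $\Pic^\beta(S)$.

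First I would identify the components of $H_\beta$. Any effective $C' \in |\beta|$ must satisfy $C' \cdot [F] = 0$, hence is vertical and decomposes uniquely as $\pi^* D' + \sum_i a_i F_i$ with $D'$ an effective divisor of some degree $d \geq 0$ on $C$ and $0 \leq a_i < m_i$ (the bound enforcing uniqueness since $m_i F_i \sim F$ rationally). For each admissible tuple $(d, a_1, \dots, a_r)$ with $d[F] + \sum_i a_i[F_i] = \beta$ in $H^2(S,\Z)$, the corresponding locus is naturally $\Sym^d C$, parametrizing the choice of $D'$. These are pairwise disjoint components of $H_\beta$.

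Second, I would compute $[\Sym^d C]^{vir}$ on each component. From the obstruction theory $(R\pi_* \O_\cC(\cC))^\vee \to \LL_{H_\beta}$, Riemann-Roch gives $h^0 - h^1 = 0$ for $\O_{C'}(C')$, so I expect a balanced (vdim $=0$) obstruction picture. Via the short exact sequence $0 \to \O_S \to \O_S(C') \to \O_{C'}(C') \to 0$, together with $\O_S(C') = \pi^* \O_C(D') \otimes \O_S(\sum_i a_i F_i)$ and the projection formula through $\pi$, I would identify the obstruction sheaf on this component with a locally free sheaf $V$ of rank $d$ on $\Sym^d C$ that pulls back from $\Pic^d(C)$ via Abel-Jacobi, in terms of a natural bundle built from $R^1\pi_* \O_S$. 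Standard perfect-obstruction-theory formalism (combined with a sign coming from the orientation of the obstruction bundle of rank $d$) then yields $[\Sym^d C]^{vir} = (-1)^d e(V) \cap [\Sym^d C]$, and the classical top-Chern-class computation for pullbacks of such tautological bundles on symmetric products of curves evaluates $\int e(V) = \binom{N}{d}$ with $N = \deg D = 2g - 2 + \chi(\O_S)$. Summing over admissible $(d,a)$ gives the claimed formula.

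The main obstacle is step three: precisely identifying the obstruction bundle $V$ as a natural rank-$d$ bundle on $\Sym^d C$ pulled back from $\Pic^d(C)$ with the correct Chern character, particularly in tracking how the multiple-fibre corrections $\sum_i a_i F_i$ twist the underlying Picard bundle without changing either the rank $d$ or the top Chern class $\binom{N}{d}$. Pinning down the sign convention $(-1)^d$ unambiguously, rather than modulo an overall sign, likewise requires care; an independent check is that the formula must be invariant under $\beta \leftrightarrow k - \beta$ in the appropriate sense, which provides a sanity check on the signs and on the range $0 \leq a_i < m_i$.
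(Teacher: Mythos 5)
This proposition is not proved in the paper at all: it is quoted from \cite[Prop.~4.4]{FM} and \cite[Prop.~4.8]{DKO}, so there is no internal argument to compare yours against, and your sketch has to be measured against those references. Your overall strategy is the correct one and is essentially theirs: by (\ref{K}) the class $k$ is a rational multiple of $[F]$, so $\beta\cdot k=0$, the virtual dimension of $[H_\beta]^{vir}$ is $0$, and $SW(\beta)=\deg[H_\beta]^{vir}$; and the set-theoretic decomposition of $H_\beta$ into pieces $\Sym^d C$ indexed by the admissible $(d,a_1,\dots,a_r)$ is right. Do note, however, that to rule out contributions from components of \emph{reducible} fibres you must invoke Zariski's lemma: it is the hypothesis $\beta^2=0$, not verticality alone, that forces every effective $C'\in|\beta|$ to be a sum of rational multiples of entire fibres, and hence of the form $\pi^*D'+\sum_i a_iF_i$.

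The genuine gap is your third step, which is where the entire content of the formula lives and which you only gesture at. First, the identity $[\,\cdot\,]^{vir}=e(V)\cap[\Sym^dC]$ presupposes that the relevant component of $H_\beta$ is \emph{scheme-theoretically} the smooth variety $\Sym^dC$; this is not automatic, since the Zariski tangent space $H^0(\O_{C'}(C'))$ of $H_\beta$ at $C'$ can exceed $d$ (it does whenever $H^1(\O_S)\to H^1(\O_S(C'))$ has nonzero kernel), and this has to be controlled before the excess-intersection formula applies. Second, there is no ``sign coming from the orientation of the obstruction bundle'' in this algebraic setting: for smooth $M$ with locally free obstruction sheaf $V$ the virtual class is $e(V)\cap[M]$, with no sign to choose. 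The factor $(-1)^d$ must be produced by the geometry --- for instance, Serre duality identifies the obstruction space $H^1(\O_{C'}(C'))$ with $H^0(\omega_{C'}\otimes\O(-C')|_{C'})^*$, exhibiting the obstruction bundle as the dual $V^\vee$ of a tautological rank-$d$ bundle on $\Sym^dC$ attached to a line bundle of degree $2g-2+\chi(\O_S)$ on $C$, after which one needs $e(V)=(-1)^de(V^\vee)$ and the classical evaluation $\int_{\Sym^dC}e(V^\vee)=\binom{2g-2+\chi(\O_S)}{d}$. Until that bundle is pinned down and its top Chern class actually computed, the right-hand side of the proposition has not been derived. Finally, your proposed sanity check is based on a false premise: since $p_g(S)=0$ is allowed here, $SW(\beta)$ is the chamber-dependent invariant $P^+_S(\beta)$ and is \emph{not} symmetric under $\beta\leftrightarrow k-\beta$; that involution exchanges $P^+$ and $P^-$, and the discrepancy is precisely the wall-crossing term of Theorem \ref{wall-crossing} (see Example \ref{example1}, where $SW(\beta')=1$ while $SW(k'-\beta')=0$).
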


Here we should recall the usual conventions on binomial coefficients. For each $b \geq 0$, define 
$$
\binom{a}{b}= \frac{1}{b!}a(a-1)\cdots(a-b+1).
$$ 
In particular, $\binom{a}{b} = 1$ for $b=0$, $\binom{a}{b} = 0$ for $0 \leq a < b$, and $\binom{-a}{b} = (-1)^b \binom{a+b-1}{b}$. 

\begin{example} \label{example2}
Let $S$ be an hyper-elliptic surface and $\beta = d[F]$ for any $d \geq 0$. Note that $q(S)=1$ and $p_g(S)=0$. Proposition \ref{ellfib} implies $P_S(\beta) = 0$ for $d>0$ and $P_S(\beta) = 1$ for $d=0$. Since $\beta^2 = \beta.\k = \k^2 = c_2(S) = [\beta] = [\k] = 0$ for any $d \geq 0$, this also provides a counter-example to universality. Although $K_S$ is a non-trivial torsion element of $A^1(S)$, its class $\k=0 \in H^2(S,\Q)$. The class $\beta = 0$ satisfies Condition (\ref{weak}) but not the stronger Condition (\ref{strong}) since $H^2(\O(K_S)) \neq 0$. Hence, there is no contradiction with Theorem \ref{old}. \hfill $\oslash$
\end{example}

Finally, we apply Proposition \ref{ellfib} to a special class of logarithmic transformations discussed in \cite[Sect.~4.2]{DKO}. They will provide more interesting counter-examples to universality. We recall their construction. Fix an elliptic curve $F = \C / \Gamma$ with lattice $\Gamma = \langle 1, \omega \rangle \subset \C$. We apply logarithmic transformations to $\PP^1 \times F \rightarrow \PP^1$ as follows. Fix a point $t_1 \in \PP^1$ and an $m_1$-torsion point $\zeta_1 \in F$, $m_1>0$. The logarithmic transformation $L_{t_1}(m_1, \zeta_1)(\PP^1 \times F)$ replaces the fibre over $t_1$ by $m_1 F_1$. Continuing in this fashion with other distinct points $t_2, \ldots, t_r \in \PP^1$, one obtains a smooth compact complex surface
\[
S:=L_{\underline{t}}(\underline{m},\underline{\zeta})(\PP^1 \times F),
\]
which is an elliptic fibration over $\PP^1$. It has generic fibre $F$ and multiple fibres $m_1 F_1$, $\ldots$, $m_r F_r$. The following proposition \cite[Sect.~4.2]{DKO} summarizes the relevant geometry.
\begin{proposition}[D\"urr-Kabanov-Okonek] \label{log}
Suppose $\zeta_1, \ldots, \zeta_r$ are of the form $\zeta_i = \frac{u_i + v_i \omega}{m_i}$ for integers $u_i$, $v_i$ satisfying $\gcd(m_i,u_i,v_i) = 1$. 
\begin{enumerate}
\item[(1)] The surface $S$ is projective if and only if $\sum_{i=1}^r \zeta_i = 0$.
\item[(2)] Suppose (1) is satisfied. Then $H^2(S,\Z) \cong \Z \oplus G$, where $G$ is the free abelian group generated by $[F], [F_1], \ldots, [F_r]$ modulo the relations
\begin{align*}
&m_1 [F_1] = \cdots = m_r [F_r] = [F], \\
&u_1 [F_1] + \cdots + u_r [F_r] = 0, \ v_1 [F_1] + \cdots + v_r [F_r] = 0.
\end{align*}
\item[(3)] Suppose (1) is satisfied. Let $\Gamma' \subset \C$ be the lattice generated by the elements $1,\omega, \zeta_1, \ldots, \zeta_r$ and consider the Albanese map $\Alb : S \rightarrow \Alb(S)$. Then there exists an isomorphism $\Alb(S) \cong \C / \Gamma'$ such that the following diagram commutes
\begin{displaymath}
\xymatrix
{
F \ar^{\Alb|_F}[r] \ar@{=}[d] & \Alb(S) \ar^{\cong}[d] \\
\C / \Gamma \ar[r] & \C / \Gamma',
}
\end{displaymath}
where the bottom map is induced by $\Gamma \subset \Gamma'$.
\end{enumerate}
\end{proposition}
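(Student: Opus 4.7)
The plan is to follow DKO and establish the three parts in the order (3), (2), (1), since (3) provides the key geometric input for the latter two.

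For part (3), I would construct a holomorphic map $\Phi : S \to \C/\Gamma'$ explicitly. Over $U := \PP^1 \setminus \{t_1, \ldots, t_r\}$ the fibration is trivial, so projection to $F = \C/\Gamma$ composed with the quotient $\C/\Gamma \twoheadrightarrow \C/\Gamma'$ yields the required map. Near each multiple fibre $m_i F_i$, the local model of the logarithmic transformation realizes the monodromy around $t_i$ as translation by $\zeta_i$ on $F$; since $\zeta_i \in \Gamma'$, this translation becomes trivial after quotienting by $\Gamma'$, so $\Phi$ extends holomorphically across $t_i$. The restriction of $\Phi$ to a smooth fibre is exactly the composed quotient, giving the asserted diagram. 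By the universal property $\Phi$ factors as $S \to \Alb(S) \to \C/\Gamma'$, and I would identify this factorization with an isomorphism by computing $q(S) = h^0(\PP^1, R^1 \pi_* \O_S) = 1$ via the Leray spectral sequence.

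For part (2), I would apply Mayer-Vietoris with $\Z$ coefficients to the cover of $S$ by $\pi^{-1}(U)$ and tubular neighborhoods $V_i$ of the multiple fibres. The free $\Z$ summand comes from a multisection class (dual to the pullback of a point under $\pi$), the classes $[F_i]$ with $m_i[F_i] = [F]$ come from the local structure at each $V_i$, and the remaining relations $\sum_i u_i[F_i] = \sum_i v_i[F_i] = 0$ follow from (3) by pulling back the two generators of $H^1(\C/\Gamma', \Z)$ under $\Phi^*$ and pairing Poincar\'e dually with each $[F_i]$, reading off the components of $\zeta_i$ in the basis $\{1, \omega\}$.

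The main obstacle is part (1). Since $p_g(S) = 0$ by another Leray computation, Kodaira's criterion reduces projectivity to the parity of $b_1(S)$, and (3) yields $q(S) = 1$ so $b_1(S) \in \{1, 2\}$. I would compute $H_1(S, \Z)$ via Seifert-van Kampen on the above cover: loops $\gamma_i$ around $t_i$ satisfy $\prod_i \gamma_i = 1$ in $\pi_1(\PP^1 \setminus \{t_1, \ldots, t_r\})$, and their lifts to $S$ shift a reference fibre by $\zeta_i$. After abelianizing and imposing the $m_i \gamma_i = 0$ relations coming from the multiple fibres, the rank of $H_1(S, \Z)$ equals $2$ precisely when the sum relation lifts to $\sum_i \zeta_i = 0$ in $\C$, and equals $1$ otherwise, giving the stated projectivity dichotomy. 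The delicate technical step is tracking how the integer lifts $u_i, v_i$ of $\zeta_i$ interact with the torsion in $H_1(S, \Z)$; the cleanest route is to compare the period lattice of the morphism $\Phi$ of part (3) directly with $\Gamma$ and $\Gamma'$, deducing that $\Phi_*$ on $H_1/\text{torsion}$ is an isomorphism exactly when $\sum_i \zeta_i = 0$.
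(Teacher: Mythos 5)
The paper does not prove this proposition: it is quoted directly from \cite[Sect.~4.2]{DKO} (who in turn follow Friedman--Morgan), so there is no in-paper argument to compare against. Your outline --- van Kampen/Mayer--Vietoris for the topology, an explicit map to $\C/\Gamma'$ for the Albanese, and the parity of $b_1$ plus Kodaira's criterion for projectivity --- is the standard route and is essentially the one taken in the cited sources, so the overall plan is sound.

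There is, however, a genuine gap in your part (3), which then infects the logic of (1). The holomorphic gluing in a logarithmic transformation identifies $(B_{m_i}^*\times F)/(\Z/m_i)$ with the trivial bundle $\Delta^*\times F$ via a \emph{multivalued} translation of the form $z\mapsto z-\frac{\zeta_i}{2\pi i}\log(t-t_i)$; it is only in the $C^\infty$ category that the monodromy is literally ``translation by $\zeta_i$'' with respect to the product trivialization. Consequently ``projection to $F$ followed by $\C/\Gamma\twoheadrightarrow\C/\Gamma'$'' on $\pi^{-1}(U)$ does \emph{not} extend across the multiple fibres; the map that does extend is $ (t,z)\mapsto z+\sum_i\frac{\zeta_i}{2\pi i}\log(t-t_i)\bmod\Gamma'$, which is single-valued around each $t_i$ precisely because $\zeta_i\in\Gamma'$, but which extends over the remaining point of $\PP^1$ only when $\sum_i\zeta_i=0$. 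Your sketch of (3) never uses the hypothesis $\sum_i\zeta_i=0$, and (3) is false without it: if $\sum_i\zeta_i\neq0$ then $b_1(S)=1$, so $h^{1,0}(S)=h^{0,1}(S)-1=0$ and the Albanese torus is a point. This also means the proposed order (3) $\Rightarrow$ (1) is circular as written; you should first do the $\pi_1$/van Kampen computation of $H_1(S,\Z)$ (which correctly yields $b_1=2$ iff $\sum_i\zeta_i=0$ and $b_1=1$ otherwise, hence the projectivity dichotomy via ``$b_1$ even $\Leftrightarrow$ K\"ahler'' and ``K\"ahler with $p_g=0$ $\Rightarrow$ projective''), and only then construct the Albanese map under that hypothesis. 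With that reordering and the corrected extension argument, the rest of your sketch, including the Mayer--Vietoris derivation of the relations $\sum_iu_i[F_i]=\sum_iv_i[F_i]=0$ from the relations $m_i\delta_i=u_i+v_i\omega$ in $H_1$, goes through.
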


The following example is used in \cite[Ex.~4.14]{DKO} to provide a surface $S$ with $p_g(S)=0$ and $P_S(\k) \neq 0$. We use it to give an interesting example where universality fails.
\begin{example} \label{example3}
Take $\zeta_1 = \frac{1+\omega}{3}$, $\zeta_2 = \frac{1}{3}$, $\zeta_3 = \frac{1}{3}$, and $\zeta_4=-\frac{3+\omega}{3}$. By Proposition \ref{log}, $S$ is projective, $[F] = 3[F_1]$, $[F_4] = [F_1]$, $[F_3] = 2[F_1] - [F_2]$, and
\begin{align*}
H^2(S,\Z) &\cong \Z \oplus \langle [F_1], [F_2] \ | \ 3[F_1] = 3[F_2]\rangle_\Z \\
&\cong \Z^{\oplus 2} \oplus \Z / 3 \Z.
\end{align*}
By Equation (\ref{K}), $K_S = 2F_1$ and $\k = \frac{2}{3}[F] \in H^2(S,\Q)$. We fix $\beta = n [F_1] + \epsilon [F_2]$ with $n \in \Z_{\geq 0}$ and $\epsilon = 0,1,2$. The surface $S$ satisfies $q(S)=1$ and $p_g(S)=0$.  Clearly, $\beta^2 = \beta.\k=\k^2=c_2(S)=0$. Let $E$ be the class of the fibre of $S \rightarrow \Alb(S)$. Then Proposition \ref{log} (3) implies $E.F = 9$. Hence $[\beta] = \beta.E = 3(n+\epsilon)$. By Proposition \ref{ellfib}
\[
P_S(\beta) = \sum_{{\scriptsize{\begin{array}{c} (3d+a_1+2a_3+a_4)[F_1]+(a_2-a_3)[F_2] = n[F_1]+\epsilon[F_2] \\ d \geq 0, \ a_i=0,1,2 \end{array}}}} (d+1).
\]
For all $(n,\epsilon) \not\in \{(0,0),(1,0),(2,0),(0,1)\}$, this is equal to
\[
P_S(\beta) = [\beta]-3.
\]
For $(n,\epsilon) = (0,0),(1,0),(2,0),(0,1)$, we get the sporadic values 
$$
P_S(\beta)=1,2,4,1.
$$ 
This gives another counter-example to universality. \hfill $\oslash$
\end{example}

\begin{remark} 
One can consider reduced stable pair invariants with other insertion classes such as 
\begin{equation} \label{invgamma}
P_{\chi,\beta}^{\red}(S,\tau_0(\pt)^m \tau_0(\gamma_1) \ldots \tau_0(\gamma_{2q(S)})),
\end{equation}
where $\gamma_1, \ldots, \gamma_{2q(S)} \in H_1(S)/\mathrm{torsion}$ is an integral oriented basis \cite[Sect.~3]{KT2}. The $H_1$-insertions cut $H_\beta$ down to a linear system $|L| \subset H_\beta$. Fix any $S,\beta$ with $\beta$ satisfying Condition (\ref{weak}) but not necessarily the stronger Condition (\ref{strong}). Suppose $H_\beta \neq \varnothing$ and $\beta(\beta-\k) \geq 0$. Using Proposition \ref{redcycle}, it is easy to see that \cite[Sect.~3]{KT2} continues to hold. Therefore \eqref{invgamma} is given by a universal polynomial in $\beta^2$, $\beta.\k$, $\k^2$, $c_2(S)$ exactly as in \cite[Thm.~1.1]{KT2}. Note that this does not contradict Example \ref{example1} where $|6K_S| = \varnothing$. \hfill $\oslash$
\end{remark}

\begin{remark} \label{(iii)examples}
The conditions for the duality formula of Theorem \ref{main} are: $p_g(S)=0$, $H_\beta$, $H_{\k-\beta}$ are both non-empty, and $\beta(\beta-\k) \geq 0$. Proposition \ref{chi=0} implies $\beta(\beta-\k)=0$, $q(S)=1$, and $S$ is not a ruled surface or a blow-up of a ruled surface. Therefore $S$ is hyper-elliptic, minimal properly elliptic, or a blow-up thereof. Conversely, any hyper-elliptic surface $S$ or blow-up thereof with $\beta=\k$ satisfies the conditions of Theorem \ref{main}. These examples are boring because $P_{S}^{\pm}(\k) = 1$ (by Example \ref{example2} and the blow-up formula \cite[Thm.~3.12]{DKO}). More exciting examples are provided by $S$ as in Proposition \ref{log} and $\beta=\k$. From Theorem \ref{wall-crossing} it follows that these surfaces generally have $H_\k \neq \varnothing$. Blowing up these surfaces, one obtains examples with $H_\k \neq \varnothing$ and $\k^2 \neq 0$. \hfill $\oslash$ 
\end{remark}


\noindent {\tt{m.kool1@uu.nl}} \\
\noindent Mathematical Institute, Utrecht University \\ 
\noindent Budapestlaan 6, 3584 CD Utrecht, The Netherlands 
\end{document}